\documentclass[12pt]{article}

\usepackage{amssymb}
\usepackage{amsthm}
\usepackage{amsmath}
\usepackage{graphicx}
\usepackage{fullpage}
\usepackage{color}
\usepackage{enumerate}
 \numberwithin{equation}{section}
\usepackage{booktabs}
\allowdisplaybreaks

\usepackage[pdftex]{hyperref}
 \usepackage{mathtools}
 \mathtoolsset{showonlyrefs}
 \usepackage[nocompress]{cite}
\usepackage{mathrsfs}
\theoremstyle{plain}
\newtheorem{thm}{Theorem}[section]
\newtheorem{cor}[thm]{Corollary}

\newtheorem{lem}[thm]{Lemma}
\newtheorem{prop}[thm]{Proposition}

\theoremstyle{definition}

\newtheorem{ex}[thm]{Example}

\theoremstyle{remark}

\newtheorem{rem}[thm]{Remark}

\newcommand{\N}{\mathbb{N}}
\newcommand{\R}{\mathbb{R}}


\newcommand{\Pee}{{\cal P}}
\newcommand{\Jay}{\mathscr{J}}


\newcommand{\bp}{\begin{proof}[\ensuremath{\mathbf{Proof}}]}
\newcommand{\bs}{\begin{proof}[\ensuremath{\mathbf{Solution}}]}
\newcommand{\ep}{\end{proof}}
\newcommand{\be}{\begin{equation}}
\newcommand{\ee}{\end{equation}}

\begin{document}

\title{Evolution of mixed strategies in monotone games }

\author{Ryan Hynd\footnote{Department of Mathematics, University of Pennsylvania.  Partially supported by NSF award DMS-1554130.}}

\maketitle 

\begin{abstract}
We consider the basic problem of approximating Nash equilibria in noncooperative games. For monotone games, we design continuous time flows which converge in an averaged sense to Nash equilibria.  We also study mean field equilibria, which arise in the large player limit of symmetric noncooperative games. In this setting, we will additionally show that the approximation of mean field equilibria is possible under a suitable monotonicity hypothesis.  
\end{abstract}
{\bf Key words.}  monotone games, Gaussian measures, contraction semigroups 
\\\\
{\bf AMS subject classifications.}  91A10, 91A16, 46G12

\section{Introduction}
We begin by recalling a noncooperative game in which players labeled $j=1,\dots, N$ each have finitely many choices.  For definiteness, let us suppose that each player selects an action among the first $m$ natural numbers and that each player is unaware of the others' selections.  If player $i$ selects $s_i\in \{1,\dots, m\}$ for $i=1,\dots, N$, player $j$'s cost is a number
$$
f_j(s_1,\dots,s_N).
$$
Each player seeks to have as small of a cost as possible.  However, each player's cost depends on the other players' actions.

\par This type of game leads naturally to the notion of a Nash equilibrium. This is an $N$-tuple $s=(s_1,\dots,s_N)$ for which 
$$
f_j(s)\le f_j(t_j,s_{-j})
$$
for all $t_j=1,\dots, m$ and $j=1,\dots, N$. Here we have written 
$$
(t_j,s_{-j})=(s_1,\dots,s_{j-1},t_j,s_{j+1},\dots,s_N).
$$
Note in particular that no player can pay a smaller cost by making a unilateral change. Simple examples can be found in which Nash equilibria do not exist. However, Nash showed such equilibria exist if mixed strategies are allowed \cite{MR43432,MR31701}. 

\par For any $m\in \N$, we will denote the standard $m$-simplex as
$$
\Delta_{m}=\left\{z\in \R^m: z_j\ge 0,\; \sum^{m}_{j=1}z_j=1\right\}.
$$ 
A {\it mixed strategy} for player $i$ is an element 
$x_i=(x_{i,1},\dots, x_{i,m})\in \Delta_{m},$
which corresponds to player $i$ choosing action $j\in \{1,\dots, m\}$ with probability $x_{i,j}$. If player $i$ selects the mixed strategy $x_i\in \Delta_{m}$ for each $i=1,\dots, N$, player $j$'s expected cost is defined to be
\be\label{FiniteGameFeye}
F_j(x_1,\dots, x_N)=\sum^{m}_{s_1=1}\cdots\sum^{m}_{s_N=1} f_j(s)x_{1,s_1}\cdots x_{N,s_N}.
\ee
We'll also say $x_i\in \Delta_{m}$ is a {\it pure strategy} if one of the entries of $x_i$ is equal to $1$. 

\par We can extend the definition of Nash equilibria given above to incorporate mixed strategies as follows. A {\it Nash equilibrium} is an $N$-tuple $x=(x_1,\dots,x_N)$
for which 
$$
F_j(x)\le F_j(y_j,x_{-j}) 
$$
for each $y_j\in \Delta_{m}$ and $j=1,\dots, N$. Here $(y_j,x_{-j})$ is defined analogously to $(t_j,s_{-j})$ above.  As with Nash equilibria for pure strategies, no player can improve her expected cost by deviating from her current choice.  In this note, 
we will discuss the possibility of approximating Nash equilibrium for this type and more general types of games. 
\subsection{Previous work}
The existence of a Nash equilibrium for the game discussed above follows from an application of Brouwer's fixed point theorem.  Since proofs of Brouwer's fixed point theorem are nonconstructive, it seems unlikely that there would be an easy way to approximate Nash equilibria in general. This problem has been examined at length, and its complexity has been categorized as being equivalent to finding a fixed point in the conclusion of Brouwer's theorem  \cite{MR2506524,MR2510264,MR2536129}. In particular, there is no known efficient algorithm for approximating Nash equilibria. 

\par Nevertheless, there is one class of games in which approximation is at least theoretically feasible. These games are called {\it monotone}. In the context described above, their expected cost functions $F_1,\dots, F_N$ satisfy 
\be\label{initFmonotone}
\sum^N_{j=1}(F_j(x)+F_j(y))\ge \sum^N_{j=1}(F_j(x_j,y_{-j})+F_j(y_j,x_{-j}))
\ee
for $x,y\in \Delta_m^N$.  For example, any two-player zero-sum game satisfies this monotonicity condition (see Corollary \ref{twoPlayerZeroSumMon} below). This condition additionally extends more generally to cost functions $F_1,\dots, F_N$ which are not necessarily of the form \eqref{FiniteGameFeye}.  We also note there have been several recent studies on monotone games \cite{gangbo2021mean,alpar2021mean,MR3074829,MR3225957,MR3505343,MR3907835,MR3926912,MR3939746,MR4334439,MR4334453,MR4350454,MR3023769}.

\par In prior joint work  \cite{awi2020continuous}, we argued that if the game is monotone, then for any $x^0\in  \Delta_m^N$, there is an absolutely continuous path $u:[0,\infty)\rightarrow \Delta_m^N$
such that 
\be\label{classicalFlow}
\begin{cases}
\dot u_j(t)+\partial_{x_j}F_j(u(t))\ni 0
\\
 u_j(0)=x^0_j
\end{cases}
\ee
for each $j=1,\dots, N$. Here
\be\label{classicalSubdiff}
\partial_{x_j}F_j(x)=\Big\{z\in \R^{m}:F_j(y_j,x_{-j})\ge F_j(x)+z\cdot (y_j-x_j) \;\text{for $y_j\in \Delta_{m}$}\Big\},
\ee
and the dot `$\cdot$' denotes the standard dot product on $\R^{m}$.   This well-posedness can be established using the theory of semigroups of maximal monotone operators on a Hilbert space as detailed in the monograph by Br\'ezis \cite{MR0348562} and 
originating with the seminal works of Kato \cite{MR226230,MR0271782} and K$\bar{\text{o}}$mura \cite{MR216342}.
 
\par We also considered the {\it Ces\`aro mean} of $u$
$$
\frac{1}{t}\int^t_0u(s)ds.
$$
Let us suppose for the moment that this mean converges to $x$ as $t\rightarrow\infty$. In view of \eqref{initFmonotone} and \eqref{classicalFlow}, 
\begin{align}
\sum^N_{i=1}\left(F_i(z)-F_i(u_i(s),z_{-i})\right) &\ge \sum^N_{i=1}\left(F_i(z_i,u_{-i}(s))-F_i(u(s))\right)\\
&\ge -\sum^N_{i=1}\dot u_i(s)\cdot (z_i-u_i(s))  \\
&= \frac{d}{ds}\sum^N_{i=1}\frac{1}{2}|u_i(s)-z_i|^2
\end{align}
for $z\in \Delta_m^N$.  Integrating from $s=0$ to $s=t$ and dividing by $t$ gives 
$$
\sum^N_{i=1}\left(F_i(z)-F_i\left(\frac{1}{t}\int^t_0u_i(s)ds,z_{-i}\right)\right) \ge\sum^N_{i=1}\frac{1}{2t}\left(|u_i(s)-z_i|^2-|u_i^0-z_i|^2\right).
$$

\par As $u(t)\in \Delta_m^N$ is bounded, we can send $t\rightarrow\infty$ to find 
$$
\sum^N_{i=1}\left(F_i(z)-F_i\left(x_i,z_{-i}\right)\right)\ge 0.
$$
Choosing $z=(y_j,x_{-j})$ for $y_j\in \Delta_m$ would then lead to 
$$
F_j(y_j,x_{-j})-F_j(x)\ge0.
$$
That is, $x$ is a Nash equilibrium.  Of course it remains to be shown that the Ces\`aro mean of $u$ converges. This follows from a theorem due Baillon and Br\'ezis \cite{MR394328}. The goal of this study is to identify a general setting in game theory for which we can apply this result.

\subsection{A general setting}
In what follows, we will study a general version of the noncooperative game detailed above.  To this end, we will consider a separable  Banach space $X$ with continuous dual space $X^*$ and write 
$$
\mu(x)=\langle \mu,x\rangle
$$
for $\mu\in X^*$ and $x\in X$. Let us suppose $K_1,\dots, K_N\subset X^*$ are each nonempty, convex, weak* compact, and set
$$
K=K_1\times \cdots \times K_N.
$$

\par We will study collections of $N$ functions $F_1,\dots, F_N: K\rightarrow \R$ which are weak* continuous and satisfy  
\be\label{FeyeConvexityCondition}
K_j\ni \nu_j\mapsto F_j(\nu_j,\mu_{-j})\text{ convex}
\ee
for each $\mu\in K$ and $j=1,\dots, N$. We'll say $\mu\in K$ is a {\it Nash equilibrium} of $F_1,\dots, F_N$ provided that 
\be
F_j(\mu)\le F_j(\nu_j,\mu_{-j})\;\text{for all $\nu_j\in K_j$ and $j=1,\dots, N$.}
\ee
Later in this note, we will briefly recall how to justify the existence of a Nash equilibria. 

\par The prototypical scenario of interest is when $X=C(S)$ for a compact metric space $S$ and
$$
K_1=\cdots=K_N=\Pee(S).
$$
Here $\Pee(S)$ is the collection of Borel probability measures on $S$. We recall $X^*$ is isometrically isomorphic to $M(S)$, the collection of Radon measures on $S$ equipped with the total 
variation norm. Moreover,  $\Pee(S)\subset M(S)$ is convex and weak* compact. Note that if $f_j: S^N\rightarrow \R\;\text{ is continuous}$,
then 
\be\label{GameTheoryFeye}
F_j(\mu)=\int_{S^N}f_j(s)d\mu_1(s_1)\cdots d\mu_N(s_N)
\ee
is weak* continuous on $\Pee(S)^N$ for $j=1,\dots, N$. Moreover, $F_j$ clearly satisfies \eqref{FeyeConvexityCondition}.

\par These objects relate to game theory as follows. The set $S$ represents an action space for players $1,\dots, N$ in a noncooperative game.  An element $\mu_j\in \Pee(S)$ constitutes a mixed strategy for player $j$; that is, player $j$ chooses from a given collection of actions $A\subset S$ with probability $\mu_j(A)$.  Of course, $\mu_j=\delta_{s_j}$ is a pure strategy: player $j$ always select action $s_j\in S$. The value $f_j(s)$ represents player $j$'s cost if the players collectively opt for action $s=(s_1,\dots, s_N)\in S^N$. And $F_j(\mu)$ indicates player $j$'s expected cost if players $1,\dots, N$ respectively select the mixed strategies $\mu_1,\dots, \mu_N$.  Note than when $S$ is finite, this example corresponds to the $N$-player noncooperative game considered above.
\subsection{Approximation result} 
We aim to use a flow along the lines of \eqref{classicalFlow} to approximate Nash equilibria for $F_1,\dots, F_N$ in the general setting outlined above. An important detail in \eqref{classicalFlow} that we made use of is the natural embedding 
$$
\Delta_{m}\subset \R^{m}.
$$
Here $ \R^{m}$ is a Hilbert space with the usual dot product. With this goal in mind, we will employ 
$$
\text{a centered, nondegenerate Gaussian measure $\eta$ on $X$.}
$$
Recall this means $\eta$ is a Borel probability measure on $X$ such that the push forward of $\eta$ by any nonzero element of $X^*$ is a centered, nondegenerate Gaussian measure on $\R$. It turns out that $X^*\subset L^2(X,\eta)$, and we will see that the Hilbert space 
$$
H=\text{the closure of $X^*$ in $L^2(X,\eta)$}
$$
will play the role of $\R^m$ with the dot product for the flow we present below. 

\par  In analogy with \eqref{classicalSubdiff}, we define 
\be\label{NewSubdiff}
\partial_{\mu_j}F_j(\mu)=\Big\{x\in X:F_j(\nu_j,\mu_{-j})\ge F_j(\mu)+\langle\nu_j-\mu_j,x\rangle \;\text{for $\nu_j\in K_j$} \Big\}
\ee
for $\mu\in K$. Note that $\mu\in K$ is a Nash equilibrium if and only if 
\be
0\in \partial_{\mu_j}F_j(\mu)\;\text{for all $j=1,\dots, N$.} 
\ee
In addition, we'll say that $F_1,\dots, F_N$ is {\it monotone} provided 
\be\label{newFmonotone}
\sum^N_{j=1}\langle \mu_j-\nu_j,x_j-y_j\rangle \ge 0
\ee
whenever $x_j\in \partial_{\mu_j}F_j(\mu)$ and $y_j\in \partial_{\mu_j}F_j(\nu)$ for $j=1,\dots, N$. We will also see in Proposition  \ref{OneMonImpliesOther} that the aforementioned type of monotonicity \eqref{initFmonotone} is a special case of the notion just introduced. 

\par In the following theorem, we will make use of the $L^2(X,\eta)$ inner product $(\cdot,\cdot)$, a linear mapping $\Jay: H\rightarrow X$ which satisfies
\be
(\mu,\nu)=\langle \mu,\Jay\nu\rangle\;\text{ for $\mu,\nu\in X^*$},
\ee
and the set 
\be\label{DeeSet}
{\cal D}=\left\{\mu\in K: \bigcap^N_{j=1}\Jay^{-1}\left(\partial_{\mu_j}F_j(\mu)\right)\neq\emptyset\right\}.
\ee
\begin{thm}\label{ThmOne}
Suppose $F_1,\dots, F_N$ satisfies \eqref{FeyeConvexityCondition} and \eqref{newFmonotone} and that $\mu^0\in {\cal D}$.  There is a unique absolutely continuous $\xi:[0,\infty)\rightarrow H^N$ with $\xi(t)\in {\cal D}$ for each $t\ge0$ and 
\be
\begin{cases}
\Jay\dot \xi_j(t)+\partial_{\mu_j}F_j(\xi(t))\ni 0\;\text{a.e. $t\ge 0$}
\\
\xi_j(0)=\mu^0_j
\end{cases}
\ee
for each $j=1,\dots, N$. Moreover, $\xi$ is Lipschitz continuous and 
$$
\frac{1}{t}\int^t_0\xi(s)ds
$$
converges weak* to a Nash equilibrium of $F_1,\dots, F_N$ as $t\rightarrow\infty$.
\end{thm}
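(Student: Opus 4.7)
The plan is to recast the $N$ coupled inclusions as a single Cauchy problem $\dot\xi(t)+A\xi(t)\ni 0$, $\xi(0)=\mu^0$, in the Hilbert space $H^N$, where the multivalued operator $A$ on $H^N$ is defined on ${\cal D}$ by declaring $\zeta\in A\xi$ exactly when $\Jay\zeta_j\in\partial_{\mu_j}F_j(\xi)$ for each $j=1,\dots,N$. Once $A$ is shown to be maximal monotone on $H^N$, existence and uniqueness of an absolutely continuous solution, together with its Lipschitz regularity, will follow from the contraction semigroup theory of Br\'ezis, Kato, and Komura recalled in the introduction; the assumption $\mu^0\in{\cal D}$ supplies the Lipschitz bound $\|\dot\xi(t)\|_{H^N}\le\|A^0\mu^0\|_{H^N}$ via the minimal section $A^0$ of $A$.

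Monotonicity of $A$ is a direct translation of hypothesis \eqref{newFmonotone}: if $\zeta\in A\xi$ and $\zeta'\in A\xi'$, then using the identity $(\mu,\nu)=\langle\mu,\Jay\nu\rangle$ extended by density one computes
$$
(\zeta-\zeta',\xi-\xi')_{H^N}=\sum_{j=1}^N\langle\xi_j-\xi_j',\Jay\zeta_j-\Jay\zeta_j'\rangle\ge 0.
$$
The \emph{main obstacle} is maximality, which I would attack via Minty's surjectivity criterion: for each $f\in H^N$ one must exhibit $\xi\in{\cal D}$ satisfying $\Jay(f_j-\xi_j)\in\partial_{\mu_j}F_j(\xi)$ for every $j$. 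This inclusion is precisely the Nash equilibrium condition for the auxiliary game with cost functions $\tilde F_j(\mu)=F_j(\mu)+\tfrac12\|\mu_j-f_j\|_H^2$, each of which is strictly convex in its $j$-th slot and lower semicontinuous on the weak*-compact convex product $K$. Existence of such an equilibrium can then be supplied by a Kakutani fixed point argument applied to the best-response correspondence, mirroring the Brouwer-type proof the paper uses to produce Nash equilibria for the unperturbed game.

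For the long-time behavior I would apply the Baillon--Br\'ezis ergodic theorem to the contraction semigroup $S(t)$ generated by $A$. Provided $A^{-1}(0)\neq\emptyset$, which is precisely the existence of a Nash equilibrium of $F_1,\dots,F_N$, that theorem guarantees that $\tfrac1t\int_0^t\xi(s)\,ds=\tfrac1t\int_0^t S(s)\mu^0\,ds$ converges weakly in $H^N$ to some $\bar\xi\in A^{-1}(0)$, and by the very definition of $A$ this $\bar\xi$ is a Nash equilibrium. To upgrade weak $H^N$-convergence to weak* convergence in $(X^*)^N$, I would use that the Ces\`aro means stay in the weak*-compact set $K$, so every subnet has a weak* cluster point in $K$; the identity $(\mu,\zeta)_H=\langle\mu,\Jay\zeta\rangle$ for $\mu\in X^*$, together with the density of $X^*$ in $H$, shows that weak* convergence of a bounded net in $X^*$ forces weak convergence in $H$ to the same element. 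Hence every weak* cluster point equals the Baillon--Br\'ezis limit $\bar\xi$, and the full Ces\`aro mean converges weak* in $(X^*)^N$ to the Nash equilibrium $\bar\xi$.
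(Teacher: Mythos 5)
Your proposal is correct and follows essentially the same route as the paper: the same operator $A$ on $H^N$, monotonicity from \eqref{newFmonotone} via the identity $(\mu,\nu)=\langle\mu,\Jay\nu\rangle$, maximality by Minty's criterion reduced to a fixed point of a best-response correspondence (the paper uses the linearized functional $\nu_j\mapsto\langle\nu_j,\Jay(\mu_j-\sigma_j)\rangle+F_j(\nu_j,\mu_{-j})$ and Fan--Glicksberg, which is the locally convex form of Kakutani you would need here), then Br\'ezis's semigroup theorem and Baillon--Br\'ezis, with the weak-$H$ to weak* upgrade being exactly the paper's Lemma \ref{TechLEm1}. No substantive differences.
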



\par We will also prove a related approximation theorem for symmetric games.  A
prototypical example occurs when 
$F_1,\dots, F_N$ is defined via \eqref{GameTheoryFeye} with
$$
f_i(s)=f\left(s_i,\frac{1}{N-1}\sum_{j\ne i}\delta_{s_j}\right)
$$
for $i=1,\dots, N$ and some continuous $f:S\times \Pee(S)\rightarrow \R$.  It turns out that $F_1,\dots, F_N$ has a symmetric Nash equilibrium $(\mu^N,\dots, \mu^N)\in \Pee(S)^N$.  Furthermore, when $N\rightarrow\infty$, $(\mu^N)_{N\in \N}$ has a subsequence which converges weak* to some $\mu$ that satisfies 
$$
\int_{S}f(s,\mu)d\mu(s)\le \int_{S}f(s,\mu)d\nu(s)
$$
for each $\nu \in \Pee(S)$ (as explained in Chapter 4 of \cite{DanNotes}). Such a $\mu$ is called a {\it mean field equilibrium}. Finding mean field equilibria is a basic problem in the theory of mean field games \cite{MR3967062,DanNotes,MR3752669,MR3195844} and we will informally refer to this example as a {\it static mean field game}.   In Theorem \ref{ThmTwo} below, we will employ a simpler version of the flow described in Theorem \ref{ThmOne} to approximate symmetric and mean field equilibria. 
\\
\par Most approximation results for Nash equilibria under monotonicity constraints, such as the ones verified in \cite{MR3703507, MR2948917,  MR4090377,MR3108427,MR3361444,MR3337989,MR4205673,MR3225845}, involve discrete time flows. The first study that used a continuous time flow to approximate Nash equilibria in monotone games set in finite dimensions was initiated by Fl\aa m \cite{MR1201625}.  
In our prior work \cite{awi2020continuous},  we extended Fl\aa m's work to Hilbert spaces and highlighted the role of the Ces\`aro mean. The contribution of this paper is in verifying that theoretical approximation can be obtained with a continuous time flow for monotone games set in dual Banach spaces.

\par  This paper is organized as follows. In section \ref{prelimSect}, we will recall some basic facts about Gaussian measures. Next, we will discuss general $N$-player games in section \ref{NplayGameSect} and prove Theorem \ref{ThmOne}.    Then in section \ref{SymmetricMFGsect}, we will show how to approximate equilibria in symmetric and static mean field games provided that the appropriate monotonicity hypothesis is in place.  In the appendix, we will show how our general theory reduces to the type of game discussed at the beginning of this introduction and work out an explicit example to illustrate why we can't expect to have better than convergence in the sense of the Ces\`aro mean.

\section{Preliminaries}\label{prelimSect}
As in the introduction, we will suppose $X$ is a separable Banach space over $\R$ with norm $\|\cdot \|$ and denote the space of continuous linear functionals $\mu: X\rightarrow \R$ as $X^*$. We will also express the dual norm as 
$$
\|\mu\|_*=\sup\{|\mu(x)|: \|x\|\le 1\}.
$$
Note that since $X$ is separable, the weak* topology on $X^*$ is metrizable. In particular, $\mu^k\rightarrow \mu$ weak* whenever
$$
\lim_{k\rightarrow\infty}\mu^k(x)=\mu(x)
$$
for all $x\in X$. It will also be important for us to recall that the unit ball $\{\mu\in X^*:\|\mu\|_*\le 1\}$ is weak* compact by Alaoglu's theorem. That is, dual norm bounded sequences 
have weak* convergent subsequences.

\subsection{ Gaussian measures}
We'll say that $\eta$ is a centered, nondegenerate Gaussian measure on $X$ provided it is a Borel probability measure such that the pushforward of $\eta$ by any $\mu\in X^*\setminus\{0\}$ is a centered, nondegenerate Gaussian measure on $\R$. That is,
$$
\int_Xg(\mu(x))d\eta(x)=\int_{\R}g(y)\frac{e^{-\frac{y^2}{2q}}}{\sqrt{2\pi q}}dy
$$
for some $q> 0$ and all bounded and continuous $g: \R\rightarrow \R$. For convenience, we'll simply refer to a centered, nondegenerate Gaussian measure as a {\it Gaussian measure}. Below we will recall some basic properties of Gaussian measures for the purposes of this paper, which can be found in
\cite{MR1642391,eldredge2016analysis,hairer2009introduction}. 

\par Let $\eta$ be a Gaussian measure on $X$.  It is known that $\eta$ has a finite second moment
$$
\int_{X}\|x\|^2d\eta(x)<\infty.
$$
We'll write 
$$
(\mu,\nu)=\int_X\mu(x)\nu(x)d\eta(x)
$$
for the inner product between $\mu$ and $\mu$ in $L^2(X,\eta)$ and 
$$
\|\mu\|_{L^2}=(\mu,\mu)^{1/2}.
$$
Note that if $\mu\in X^*$ and $x\in X$, $|\mu(x)|\le \|\mu\|_*\|x\|.$ It follows that 
$$
\|\mu\|_{L^2}\le \|\mu\|_*\left(\int_{X}\|x\|^2d\eta(x)\right)^{1/2}.
$$
Therefore,  $X^*\subset L^2(X,\eta).$

\par As in the introduction, we denote 
$$
H=\text{closure of $X^*$ in the $L^2(X,\eta)$ norm}.
$$
We can think of $H$ being linear functionals on $X$ which are merely square integrable with respect to $\eta$. 
With this choice of Hilbert space $H$, 
\be\label{ContEmbedML2}
X^*\subset H
\ee
is a dense subspace.  Moreover, this embedding is compact.

\par By our definition of a Gaussian measure, if $\mu\in X^*\setminus\{0\}$, then $(\mu,\mu)>0$. Therefore, if $\mu_1,\mu_2\in X^*$ are equal $\eta$ almost everywhere, they must agree everywhere on $X$. 
The following lemma is a consequence of this observation. 
\begin{lem}\label{TechLEm1}
Suppose $(\mu^k)_{k\in \N}$ is a bounded sequence in $X^*$ which converges weakly in $H$ to $\xi$. Then $(\mu^k)_{k\in \N}$ converges weak*, and its limit agrees $\eta$ almost everywhere with $\xi$. 
\end{lem}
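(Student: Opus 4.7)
The plan is to combine weak* compactness on bounded sets of $X^*$ with a dominated convergence argument to identify the weak* limit with the weak $H$-limit $\xi$.

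First I would use Alaoglu's theorem together with the metrizability of the weak* topology on bounded subsets of $X^*$ (which follows from the separability of $X$) to extract, from any subsequence of $(\mu^k)$, a further subsequence $(\mu^{k_j})$ that converges weak* to some $\mu\in X^*$. The standing assumption $\sup_k \|\mu^k\|_* \le C < \infty$ will be crucial both for this extraction and for the next step.

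Next I would upgrade the weak* convergence $\mu^{k_j}\to \mu$ to norm convergence in $L^2(X,\eta)$. Weak* convergence gives $\mu^{k_j}(x)\to \mu(x)$ for every $x\in X$, and the dual-norm bound gives the pointwise domination $|\mu^{k_j}(x)-\mu(x)|^2 \le (2C\|x\|)^2$. Since $\eta$ is Gaussian, $\int_X \|x\|^2\, d\eta(x)<\infty$, so the dominated convergence theorem yields $\mu^{k_j}\to \mu$ in $L^2(X,\eta)$, hence in $H$. Because the full sequence converges weakly in $H$ to $\xi$, this forces $\mu=\xi$ in $H$, i.e., $\mu=\xi$ $\eta$-almost everywhere.

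Now I would invoke the uniqueness observation the paper made just before the lemma: two elements of $X^*$ that agree $\eta$-a.e.\ must coincide pointwise on $X$. Applying this to any two subsequential weak* limits $\mu,\mu'\in X^*$, both of which must equal $\xi$ $\eta$-a.e.\ by the previous step, shows $\mu=\mu'$. Thus every weak*-convergent subsequence of $(\mu^k)$ has the same limit $\mu\in X^*$, and combined with metrizability of weak* on the bounded set $\{\mu\in X^*:\|\mu\|_*\le C\}$, this gives weak* convergence of the entire sequence to $\mu$, which agrees with $\xi$ $\eta$-a.e.

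There is no serious obstacle here; the only subtle point is recognizing that the finite second moment of $\eta$ provides the domination needed to pass from weak* convergence to $L^2$ convergence along a subsequence, and that the pointwise-vs-a.e.\ rigidity of $X^*$-functionals under $\eta$ is what promotes the subsequential weak* limit to a limit of the full sequence.
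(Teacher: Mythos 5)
Your argument is correct and follows essentially the same route as the paper: extract a weak* convergent subsequence via Alaoglu and metrizability, use dominated convergence with the finite second moment of $\eta$ to identify the subsequential limit with $\xi$ in $H$, and then use the fact that elements of $X^*$ agreeing $\eta$-a.e.\ agree everywhere to show all subsequential weak* limits coincide. The only (harmless) variation is that you upgrade the subsequence to \emph{norm} convergence in $L^2(X,\eta)$ by dominating $|\mu^{k_j}(x)-\mu(x)|^2$ by $4C^2\|x\|^2$, whereas the paper only establishes weak convergence in $H$ by testing against a fixed $\zeta\in H$; both rest on the same dominated convergence mechanism.
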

\begin{proof}
Choose a subsequence $(\mu^{k_j})_{j\in \N}$ which converges weak* to some $\mu\in X^*$.  Note that for a given $\zeta\in H$, 
$$
|\mu^k(x)\zeta(x)|\le c\|x\||\zeta(x)|
$$
for some $c$ independent of $k\in\N$ and $x\in X$. Observe that the right hand side above is in $L^1(X,\eta)$.  Dominated convergence then gives 
$$
\lim_{j\rightarrow\infty}\int_{X} \mu^{k_j}(x)\zeta(x)d\eta(x)=\int_{X}\mu(x)\zeta(x)d\eta(x).
$$
It follows that $(\mu^{k_j})_{j\in \N}$ converges weakly to $\mu$ in $H$. As a result, $\mu=\xi$ almost everywhere.   If ($\mu^k)_{k\in \N}$ has another weak* subsequential limit $\tilde\mu$, then $\mu(x)=\tilde \mu(x)$ for $\eta$ almost $x\in X$.  Therefore, $\mu\equiv \tilde\mu$ and  $(\mu^k)_{k\in \N}$ converges to $\mu$ since this limit is independent of the subsequence. 
\end{proof}

\subsection{The mapping $\Jay$}
We now consider the linear mapping $\Jay: H\rightarrow X$ defined by the formula 
\be\label{jayFormula}
\Jay\xi=\int_{X}x\xi(x)d\eta(x).
\ee
Observe that this Bochner integral is a well defined element of $X$. Indeed, since $\xi$ is the $L^2(X,\eta)$ limit of a sequence of continuous functions and since $X$ is separable, the mapping $x\mapsto x\xi(x)$ from $X$ into $X$ is strongly measurable; this can be seen as a consequence of Pettis' theorem (Chapter V section 4 of \cite{MR617913}). Moreover, $x\mapsto \|x\xi(x)\|$ is clearly in $L^1(X,\mu)$.

\par 
A basic assertion regarding $\Jay$ is as follows. 
\begin{prop}
$(i)$ For $\mu\in X^*$ and $\xi\in H$,
\be\label{jayidentity}
\langle \mu,\Jay\xi\rangle=(\mu,\xi).
\ee
$(ii)$ $\Jay: H\rightarrow X$ is continuous and injective. 
\end{prop}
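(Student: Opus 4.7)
The plan is to handle the two assertions in the order stated, since (ii) uses (i).

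For part (i), I would start from the defining Bochner integral $\Jay\xi=\int_X x\,\xi(x)\,d\eta(x)$ and pull the continuous linear functional $\mu\in X^*$ inside. The standard commutation $\mu\bigl(\int_X x\,\xi(x)\,d\eta(x)\bigr)=\int_X \mu(x)\,\xi(x)\,d\eta(x)$ is a basic property of the Bochner integral with respect to continuous linear maps (this requires only that $x\mapsto x\,\xi(x)$ is Bochner integrable, which was already established when $\Jay$ was defined). The right-hand side is exactly $(\mu,\xi)$ by the definition of the $L^2(X,\eta)$ inner product, giving \eqref{jayidentity}.

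For the continuity half of (ii), I would bound $\|\Jay\xi\|$ directly by passing the norm inside the Bochner integral and then applying Cauchy--Schwarz in $L^2(X,\eta)$:
\begin{equation}
\|\Jay\xi\|\le \int_X \|x\||\xi(x)|\,d\eta(x)\le \|\xi\|_{L^2}\left(\int_X\|x\|^2\,d\eta(x)\right)^{1/2}.
\end{equation}
The second moment integral is finite by the recalled property of Gaussian measures, so $\Jay$ is a bounded linear operator between the two normed spaces, hence continuous.

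For injectivity, suppose $\Jay\xi=0$ for some $\xi\in H$. Applying part (i), we have $(\mu,\xi)=\langle \mu,\Jay\xi\rangle=0$ for every $\mu\in X^*$. Since $X^*$ is dense in $H$ by the very definition of $H$, I would choose a sequence $(\mu^k)\subset X^*$ converging to $\xi$ in $H$; continuity of the inner product then gives $\|\xi\|_{L^2}^2=(\xi,\xi)=\lim_k(\mu^k,\xi)=0$, so $\xi=0$ in $H$.

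No step here looks like a serious obstacle: part (i) is a formal interchange justified by the Bochner integral, continuity in (ii) is a one-line Cauchy--Schwarz estimate using the finite second moment, and injectivity follows from (i) together with the density of $X^*$ in $H$. The only point to be a bit careful about is citing the pull-through property of the Bochner integral cleanly, but this is standard (see e.g.\ the reference to \cite{MR617913} already invoked in the paragraph defining $\Jay$).
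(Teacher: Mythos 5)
Your proposal is correct and follows essentially the same route as the paper: part (i) by pulling $\mu$ through the Bochner integral, continuity by the Cauchy--Schwarz bound $\|\Jay\xi\|\le\left(\int_X\|x\|^2\,d\eta(x)\right)^{1/2}\|\xi\|_{L^2}$, and injectivity from (i) plus the density of $X^*$ in $H$. The only difference is cosmetic: you spell out the approximating sequence in the density step, which the paper leaves implicit.
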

\begin{proof}
$(i)$ As $\Jay\xi$ is the Bochner integral \eqref{jayFormula},
$$
\langle \mu,\Jay\xi\rangle=\left\langle\mu, \int_{X}x\xi(x) d\eta(x)\right\rangle=\int_X\mu(x)\xi(x)d\eta(x).
$$
$(ii)$ Since
$$
\|\Jay\xi\|\le \left(\int_{X}\|x\|^2d\eta(x)\right)^{1/2}\|\xi\|_{L^2}
$$
for $\xi\in H$, $\Jay$ is bounded. And if $\Jay\xi=0\in X$, then $(\mu,\xi)=0$ for each $\mu\in X^*$. Since 
$X^*$ is dense in $H$, $(\mu,\xi)=0$ for each $\mu\in H$. That is, $\xi=0\in H$.
\end{proof}
\begin{rem}
It is also routine to verify that for $\mu\in X^*$, $\Jay\mu$ is the Fr\'echet derivative of 
$X^*\ni \nu\mapsto \frac{1}{2}\|\nu\|^2_{L^2}$ at $\mu$.
\end{rem}

\section{$N$-player games}\label{NplayGameSect}
The primary goal of this section is to prove Theorem \ref{ThmOne}. To this end, we suppose $K_1,\dots, K_N$ are each nonempty, convex, and compact subsets of $X^*$ and set $K=K_1\times \cdots \times K_N$. We will assume $F_j: K\rightarrow \R$ is weak* continuous and that
$\nu_j\mapsto F_j(\nu_j,\mu_{-j})$ is convex for each $\mu\in K$ and $j=1,\dots, N$.

\par First let us recall that a Nash equilibrium exists. 
\begin{prop}
$F_1,\dots, F_N$ has a Nash equilibrium. 
\end{prop}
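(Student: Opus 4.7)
The plan is to apply Ky Fan's minimax inequality to a bifunction built directly from the cost functions. Specifically, I would introduce $\Phi:K\times K\to\R$ defined by $\Phi(\mu,\nu)=\sum_{j=1}^N\bigl(F_j(\mu)-F_j(\nu_j,\mu_{-j})\bigr)$ and observe that any $\mu^*\in K$ satisfying $\Phi(\mu^*,\nu)\le 0$ for every $\nu\in K$ is automatically a Nash equilibrium: given $j$ and $\nu_j\in K_j$, take $\nu=(\nu_j,\mu^*_{-j})$ so that every term in the sum except the $j$-th cancels, leaving $F_j(\mu^*)\le F_j(\nu_j,\mu^*_{-j})$. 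Hence the entire proof reduces to producing such a point $\mu^*$.

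Next I would verify the hypotheses of Ky Fan's inequality for $\Phi$. The diagonal condition $\Phi(\mu,\mu)=0$ is immediate. For each fixed $\nu$, the map $\mu\mapsto \Phi(\mu,\nu)$ is weak* continuous, since each $F_j$ is, and in particular it is weak* lower semicontinuous. For each fixed $\mu$, the map $\nu\mapsto \Phi(\mu,\nu)$ is concave (hence quasiconcave): the first sum $\sum_j F_j(\mu)$ is constant in $\nu$, while each $\nu\mapsto -F_j(\nu_j,\mu_{-j})$ depends on $\nu$ only through its $j$-th coordinate and is concave there by hypothesis \eqref{FeyeConvexityCondition}, so the total is a sum of concave functions on the convex set $K$. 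Finally, $K=K_1\times\cdots\times K_N$ is nonempty, convex, and weak* compact as a product of such sets in $(X^*)^N$, and the product weak* topology on $K$ is Hausdorff and locally convex.

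With these ingredients, Ky Fan's inequality yields a $\mu^*\in K$ with $\sup_{\nu\in K}\Phi(\mu^*,\nu)\le 0$, which by the first paragraph is the desired Nash equilibrium. I do not expect a serious obstacle here; every continuity and compactness requirement is handed to us by the standing assumptions on $F_1,\dots,F_N$ and $K_1,\dots,K_N$, and the only real content is unwinding the definition of $\Phi$. An essentially equivalent alternative would be to apply the Kakutani--Fan--Glicksberg fixed point theorem to the best-response correspondence $\mu\mapsto\prod_j\operatorname{argmin}_{\nu_j\in K_j}F_j(\nu_j,\mu_{-j})$, whose values are nonempty, convex, and weak* compact (weak* continuity of $F_j$ plus convexity in the $j$-th variable) and whose graph is closed in $K\times K$; the resulting fixed point is again a Nash equilibrium.
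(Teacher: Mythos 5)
Your argument is correct, but it takes a genuinely different route from the paper. The paper applies the Fan--Glicksberg fixed point theorem to the set-valued best-response map $\mu\mapsto \operatorname{argmin}\{\sum_j F_j(\nu_j,\mu_{-j}):\nu\in K\}$, checking nonemptiness and convexity of images and closedness of the graph; you instead run Ky Fan's minimax inequality on the Nikaido--Isoda bifunction $\Phi(\mu,\nu)=\sum_j(F_j(\mu)-F_j(\nu_j,\mu_{-j}))$. Your verification of the hypotheses is sound: the diagonal vanishes, $\mu\mapsto\Phi(\mu,\nu)$ is weak* continuous, $\nu\mapsto\Phi(\mu,\nu)$ is concave because each summand $-F_j(\nu_j,\mu_{-j})$ depends on $\nu$ only through $\nu_j$ and is concave there by \eqref{FeyeConvexityCondition}, and $K$ is convex and weak* compact in a Hausdorff locally convex topology; the coordinate-substitution step that extracts the Nash inequality from $\sup_\nu\Phi(\mu^*,\nu)\le 0$ is exactly right. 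What each approach buys: yours avoids set-valued analysis entirely (no closed-graph check) at the cost of invoking Ky Fan's inequality, which is of the same logical depth (both rest on KKM/Brouwer); the paper's fixed-point formulation has the advantage of being reused verbatim later in the surjectivity step of Minty's lemma, so it keeps the toolkit uniform. Your closing remark correctly identifies the fixed-point alternative as essentially the paper's proof.
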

\begin{proof}
By our assumptions, the mapping from $K$ into $2^K$ 
$$
K\ni \mu\mapsto \text{argmin}\left\{ \sum^N_{j=1}F_j(\nu_j,\mu_{-j}): \nu\in K\right\}
$$
has nonempty and convex images. Moreover, it is routine to check that the graph of this mapping is closed. 
It follows from the Fan-Glicksberg theorem \cite{MR47317,MR46638}, that
there is a fixed point
$$
\mu\in \text{argmin}\left\{ \sum^N_{j=1}F_j(\mu_j,\nu_{-j}): \nu\in K\right\},
$$
which is also a Nash equilibrium of $F_1,\dots, F_N$.  
\end{proof}
\begin{rem}
Fan \cite{MR47317} and Glicksberg \cite{MR46638} independently generalized Kakutani's fixed point theorem \cite{MR4776} to locally convex spaces. 
\end{rem}

\subsection{Monotonicity of $F_1,\dots, F_N$}
Recall that $F_1,\dots, F_N$ is monotone provided \eqref{newFmonotone} holds.  There is also a simple sufficient condition for monotonicity as detailed in the proposition below. 
\begin{prop}\label{OneMonImpliesOther}
Suppose for each $\mu,\nu\in K$, 
\be\label{FeyeMonotoneTwo}
\sum^N_{j=1}(F_j(\mu)+F_j(\nu))\ge \sum^N_{j=1}(F_j(\nu_j,\mu_{-j})+F_j(\mu_j,\nu_{-j})).
\ee
Then $F_1,\dots, F_N$ is monotone. 
\end{prop}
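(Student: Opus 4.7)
The plan is to start from the two subdifferential inequalities supplied by the hypotheses $x_j\in\partial_{\mu_j}F_j(\mu)$ and $y_j\in\partial_{\mu_j}F_j(\nu)$. By the definition \eqref{NewSubdiff}, testing the first with $\nu_j\in K_j$ and the second with $\mu_j\in K_j$ yields
\[
F_j(\nu_j,\mu_{-j})\ge F_j(\mu)+\langle\nu_j-\mu_j,x_j\rangle\quad\text{and}\quad F_j(\mu_j,\nu_{-j})\ge F_j(\nu)+\langle\mu_j-\nu_j,y_j\rangle
\]
for each $j=1,\dots,N$. This is the only place that the subdifferentials are used; the remainder of the argument is a linear combination.

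Next I would sum both families of inequalities over $j$ and add them to obtain
\[
\sum_{j=1}^N\bigl(F_j(\nu_j,\mu_{-j})+F_j(\mu_j,\nu_{-j})\bigr)\ge\sum_{j=1}^N\bigl(F_j(\mu)+F_j(\nu)\bigr)+\sum_{j=1}^N\langle\nu_j-\mu_j,x_j-y_j\rangle.
\]
Now I would invoke the hypothesis \eqref{FeyeMonotoneTwo} to bound the left hand side above by $\sum_{j=1}^N(F_j(\mu)+F_j(\nu))$. Cancelling this common term gives
\[
0\ge\sum_{j=1}^N\langle\nu_j-\mu_j,x_j-y_j\rangle,
\]
which, after flipping the sign on each pairing, is exactly the monotonicity condition \eqref{newFmonotone}.

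There is no real obstacle here: the argument is a direct chaining of the subdifferential inequality with the pairwise inequality \eqref{FeyeMonotoneTwo}, and the $\mu_j$ variables appear in exactly the symmetric pattern needed for the cross terms $F_j(\nu_j,\mu_{-j})$ and $F_j(\mu_j,\nu_{-j})$ produced by the subdifferential test points to match the ones on the right side of \eqref{FeyeMonotoneTwo}. The only thing worth checking carefully is that one is allowed to use $\nu_j$ (resp.\ $\mu_j$) as the test point in $\partial_{\mu_j}F_j(\mu)$ (resp.\ $\partial_{\mu_j}F_j(\nu)$), which is immediate since both lie in $K_j$.
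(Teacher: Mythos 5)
Your proposal is correct and follows essentially the same route as the paper: test the two subdifferentials at $\nu_j$ and $\mu_j$ respectively, sum over $j$, and invoke \eqref{FeyeMonotoneTwo} to isolate $\sum_{j=1}^N\langle \mu_j-\nu_j,x_j-y_j\rangle\ge 0$. The sign bookkeeping at the end is also right, since $\langle\nu_j-\mu_j,x_j-y_j\rangle=-\langle\mu_j-\nu_j,x_j-y_j\rangle$.
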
 
\begin{proof}
Suppose $x_j\in \partial_{\mu_j}F_j(\mu)$ and $y_j\in \partial_{\mu_j}F_j(\nu)$. Then 
$$
F_j(\nu_j,\mu_{-j})\ge F_j(\mu)+\langle \nu_j-\mu_j,x_j\rangle
$$
and 
$$
F_j(\mu_j,\nu_{-j})\ge F_j(\nu)+\langle \mu_j-\nu_j,y_j\rangle
$$
for $j=1,\dots, N$. Adding these inequalities yields 
\begin{align}
\sum^N_{j=1}(F_j(\nu_j,\mu_{-j})+F_j(\mu_j,\nu_{-j}))&\ge \sum^N_{j=1}(F_j(\mu)+F_j(\nu)) -\sum^N_{j=1}\langle \mu_j-\nu_j,x_j-y_j\rangle.
\end{align}
Using \eqref{FeyeMonotoneTwo} gives 
$$
\sum^N_{j=1}\langle \mu_j-\nu_j,x_j-y_j\rangle\ge 0.
$$
\end{proof}
\begin{rem}
The proof above shows 
\be\label{MonotonicityIneq}
\sum^N_{j=1}\langle \mu_j-\nu_j,x_j-y_j\rangle
\ge \sum^N_{j=1}\left(F_j(\mu)+F_j(\nu)\right)-\sum^N_{j=1}(F_j(\nu_j,\mu_{-j})+F_j(\mu_j,\nu_{-j}))
\ee
whenever $x_j\in \partial_{\mu_j}F_j(\mu)$ and $y_j\in \partial_{\mu_j}F_j(\nu)$ for $j=1,\dots, N$.
\end{rem}
\begin{cor}\label{twoPlayerZeroSumMon}
If $N=2$ and $F_1+F_2\equiv 0$, then $F_1, F_2$ is monotone. That is, two-person zero-sum games are monotone. 
\end{cor}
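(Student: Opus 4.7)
The plan is to derive this as an immediate consequence of Proposition \ref{OneMonImpliesOther}, so all I need to do is verify the sufficient condition \eqref{FeyeMonotoneTwo} under the zero-sum hypothesis. In fact I expect equality to hold throughout, since the zero-sum condition makes each side of \eqref{FeyeMonotoneTwo} collapse to $0$.

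More precisely, first I would observe that the hypothesis $F_1 + F_2 \equiv 0$ means that for \emph{every} pair $(\sigma_1,\sigma_2)\in K_1\times K_2$ we have $F_1(\sigma_1,\sigma_2)+F_2(\sigma_1,\sigma_2)=0$. Applying this identity to the four points $\mu$, $\nu$, $(\nu_1,\mu_2)$, and $(\mu_1,\nu_2)$ gives
$$
\sum_{j=1}^{2}\bigl(F_j(\mu)+F_j(\nu)\bigr)=0=\sum_{j=1}^{2}\bigl(F_j(\nu_j,\mu_{-j})+F_j(\mu_j,\nu_{-j})\bigr),
$$
so \eqref{FeyeMonotoneTwo} holds (with equality) for all $\mu,\nu\in K$. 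Proposition \ref{OneMonImpliesOther} then immediately yields monotonicity of $F_1,F_2$.

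There is essentially no obstacle here; the only thing to be slightly careful about is that the two mixed points $(\nu_1,\mu_2)$ and $(\mu_1,\nu_2)$ lie in $K = K_1\times K_2$, which is clear since $K$ is a product set and each coordinate is taken from the appropriate factor. So the proof is a one-line appeal to the preceding proposition after noting that both sides of \eqref{FeyeMonotoneTwo} vanish identically.
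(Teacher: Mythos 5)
Your proposal is correct and is essentially identical to the paper's own proof: both verify that the zero-sum hypothesis forces each side of \eqref{FeyeMonotoneTwo} to vanish (by grouping the four evaluation points appropriately) and then invoke Proposition \ref{OneMonImpliesOther}. No issues.
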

\begin{proof}
For $\mu,\nu\in K$,
$$
\sum^2_{j=1}(F_j(\mu)+F_j(\nu))=\sum^2_{j=1}F_j(\mu)+\sum^2_{j=1}F_j(\nu)=0+0=0,
$$
and 
\begin{align}
\sum^2_{j=1}(F_j(\nu_j,\mu_{-j})+F_j(\mu_j,\nu_{-j}))&=(F_1(\nu_1,\mu_2)+F_1(\mu_1,\nu_2))+(F_2(\mu_1,\nu_2)+F_2(\nu_1,\mu_2))\\
&=(F_1(\nu_1,\mu_2)+F_2(\nu_1,\mu_2))+(F_1(\mu_1,\nu_2))+F_2(\mu_1,\nu_2))\\
&=0+0\\
&=0.
\end{align}
\end{proof}
\par We also note that monotonicity can be verified somewhat more easily in the model case. 
\begin{prop}
Suppose $S$ is a compact metric space, $f_j: S^N\rightarrow \R$ is continuous, and set
\be\label{modelFeye}
F_j(\mu_1,\dots, \mu_N)=\int_{S^N}f_j(s)d\mu_1(s_1)\cdots d\mu_N(s_N)
\ee
for $\mu\in\Pee(S)^N$ and $j=1,\dots, N$. Then $F_1,\dots, F_N$ satisfies \eqref{FeyeMonotoneTwo} if and only 
\be\label{lilFeyeMonotone}
 \sum^N_{j=1}(f_j(s)+f_j(t))\ge \sum^N_{j=1}(f_j(s_j,t_{-j})+f_j(t_j,s_{-j}))
\ee
for all $s,t\in S^N$. 
\end{prop}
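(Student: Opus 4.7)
The plan is to establish the two implications separately. The forward direction $(\Leftarrow)$ integrates the pointwise inequality \eqref{lilFeyeMonotone} against a suitable product measure on $S^N \times S^N$, and the reverse direction $(\Rightarrow)$ specializes to Dirac masses.

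For $(\Leftarrow)$, suppose \eqref{lilFeyeMonotone} holds, and fix $\mu, \nu \in \Pee(S)^N$. I would integrate both sides of \eqref{lilFeyeMonotone} against the product probability measure
$$
d\mu_1(s_1)\cdots d\mu_N(s_N)\, d\nu_1(t_1)\cdots d\nu_N(t_N)
$$
on $S^N\times S^N$. By Fubini's theorem, since the $\nu_i$ factors integrate to $1$ against $f_j(s)$, the first summand on the left yields $F_j(\mu)$; likewise the $\mu_i$ factors integrate out of $f_j(t)$ to give $F_j(\nu)$. For the cross terms, $f_j(s_j, t_{-j})$ depends on $s_j$ (to be integrated against $\mu_j$) and on $t_i$ for $i\neq j$ (integrated against the $\nu_i$), while the remaining $\mu_i$ and $\nu_j$ factors integrate to $1$; this produces $F_j(\mu_j,\nu_{-j})$. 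Analogously, $f_j(t_j,s_{-j})$ integrates to $F_j(\nu_j,\mu_{-j})$. Summing over $j$ yields exactly \eqref{FeyeMonotoneTwo}.

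For $(\Rightarrow)$, given $s,t \in S^N$, I would set $\mu_i = \delta_{s_i}$ and $\nu_i = \delta_{t_i}$ for $i = 1,\dots, N$; these are pure strategies lying in $\Pee(S)^N$. Then \eqref{modelFeye} reduces to $F_j(\mu) = f_j(s)$, $F_j(\nu) = f_j(t)$, $F_j(\nu_j,\mu_{-j}) = f_j(t_j, s_{-j})$, and $F_j(\mu_j,\nu_{-j}) = f_j(s_j, t_{-j})$. Substituting these into the assumed inequality \eqref{FeyeMonotoneTwo} gives \eqref{lilFeyeMonotone}, and $s,t$ were arbitrary.

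There is no real obstacle here; the argument is essentially bookkeeping with tensor-product measures, relying only on Fubini and on the fact that Dirac masses belong to $\Pee(S)$. The one small point worth verifying carefully is the correct identification of which coordinates of $f_j$ are integrated against which marginals in the cross terms, so that $F_j(\mu_j, \nu_{-j})$ and $F_j(\nu_j, \mu_{-j})$ are not swapped.
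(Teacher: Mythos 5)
Your proof is correct and follows essentially the same route as the paper's: the paper likewise obtains \eqref{lilFeyeMonotone} from \eqref{FeyeMonotoneTwo} by substituting Dirac masses, and obtains \eqref{FeyeMonotoneTwo} from \eqref{lilFeyeMonotone} by integrating the pointwise inequality against $d\mu_1(s_1)\cdots d\mu_N(s_N)\,d\nu_1(t_1)\cdots d\nu_N(t_N)$. Your write-up is simply more explicit about the Fubini bookkeeping in the cross terms.
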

\begin{proof}
Suppose \eqref{lilFeyeMonotone} holds and $s,t\in S^N$. If we select $\mu_j=\delta_{s_j}$ and $\nu_j=\delta_{t_j}$ for $j=1,\dots, N$, then \eqref{FeyeMonotoneTwo} is the same inequality as \eqref{lilFeyeMonotone}. Alternatively, suppose \eqref{lilFeyeMonotone} holds and $\mu,\nu\in\Pee(S)^N$. Integrating this inequality against $d\mu_j(s_j)d\nu_j(t_j)$ for $j=1,\dots, N$ leads to \eqref{FeyeMonotoneTwo}.
\end{proof}

\subsection{Flow of mixed strategies}
Assume $\eta$ is a Gaussian measure on $X$.   As previously mentioned, the closure of $X^*$ in $L^2(X,\eta)$ is a Hilbert space $H$ with inner product $(\cdot,\cdot)$. We also will employ the linear mapping $\Jay: H\rightarrow X$ defined in \eqref{jayFormula} and suppose going forward that $F_1,\dots, F_N$ is monotone. 

\par We will now consider the problem of finding a solution $\xi: [0,\infty)\rightarrow H^N$ of the initial value problem
\be\label{mainIVP} 
\begin{cases}
\Jay\dot \xi_j(t)+\partial_{\mu_j}F_j(\xi(t))\ni 0\;\text{a.e. $t\ge 0$}
\\
\xi_j(0)=\mu^0_j
\end{cases}
\ee
for a given $\mu^0\in K$. Here $H^N$ is the $N$-fold product of $H$ endowed with the inner product 
$$
(\mu,\nu):=\sum^N_{j=1}(\mu_j,\nu_j).
$$
Observe that if  
$\xi$ and $\vartheta$ are solutions of the initial value problem \eqref{mainIVP} with possibly distinct initial conditions 
$\xi(0)=\xi^0\in K$ and $\vartheta(0)=\vartheta^0\in K$, then
\begin{align*}
\frac{d}{dt}\frac{1}{2}\sum^N_{j=1}\|\xi_j(t)-\vartheta_j(t)\|^2_{L^2}&=\sum^N_{j=1}(\xi_j(t)-\vartheta_j(t),\dot\xi_j(t)-\dot\vartheta_j(t))\\
&=\sum^N_{j=1}\langle\xi_j(t)-\vartheta_j(t),\Jay\dot\xi_j(t)-\Jay\dot\vartheta_j(t)\rangle\\
&\le 0
\end{align*}
for almost every $t\ge 0$.  As a result, we expect 
$$
\sum^N_{j=1}\|\xi_j(t)-\vartheta_j(t)\|^2_{L^2}\le \sum^N_{j=1}\|\xi_j^0-\vartheta_j^0\|^2_{L^2}
$$
for $t\ge 0$ and the initial value problem \eqref{mainIVP} to generate a contraction semigroup in $H^N$. 

\par In order to establish this, we will 
define a mapping 
$$
A: H^N\rightarrow 2^{H^N}
$$
via 
\be
A\mu:=
\begin{cases}
\displaystyle\Jay^{-1}\left(\partial_{\mu_1}F_1(\mu)\right)\times\cdots\times \Jay^{-1}\left(\partial_{\mu_N}F_N(\mu)\right), &\quad\mu\in {\cal D}\\
\displaystyle\emptyset, &\quad \mu\not\in {\cal D}
\end{cases}
\ee
for $\mu\in H^N$.  Here we recall that ${\cal D}$ is defined in \eqref{DeeSet} and emphasize that $\sigma\in A\mu$ provided $\mu\in K$ and $\Jay\sigma_j\in \partial_{\mu_j}F_j(\mu)$ for $j=1,\dots, N$.

\par We will show that $A$ is maximally monotone. We recall this means that $A$ is monotone on $H^N$ and that there is no other monotone operator on $H^N$ whose graph properly includes the graph of $A$.
\begin{lem}
A is maximally monotone.  
\end{lem}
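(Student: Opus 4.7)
The plan is to verify monotonicity of $A$ directly from the hypothesis \eqref{newFmonotone}, and then establish maximality through the Hilbert-space Minty criterion: a monotone $A$ on $H^N$ is maximal if and only if $I+A$ is surjective onto $H^N$. For monotonicity, if $\sigma\in A\mu$ and $\tau\in A\nu$, then $\Jay\sigma_j\in\partial_{\mu_j}F_j(\mu)$ and $\Jay\tau_j\in\partial_{\mu_j}F_j(\nu)$ for each $j$. Since $\mu_j-\nu_j\in X^*$, the identity \eqref{jayidentity} gives $(\mu_j-\nu_j,\sigma_j-\tau_j)=\langle\mu_j-\nu_j,\Jay\sigma_j-\Jay\tau_j\rangle$, and summing over $j$ and invoking \eqref{newFmonotone} yields $\sum_j(\mu_j-\nu_j,\sigma_j-\tau_j)\ge 0$.

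For surjectivity, fix $\zeta\in H^N$ and seek $\mu^*\in{\cal D}$ with $\zeta-\mu^*\in A\mu^*$, that is, with $\Jay(\zeta_j-\mu_j^*)\in\partial_{\mu_j}F_j(\mu^*)$ for each $j$. I will obtain $\mu^*$ as a fixed point of the set-valued map on $K$ defined by
$$T(\mu):=\prod_{j=1}^{N}\arg\min_{\nu_j\in K_j}\!\left[F_j(\nu_j,\mu_{-j})+\tfrac{1}{2}\|\nu_j-\zeta_j\|_{L^2}^2\right].$$
The images $T(\mu)$ are nonempty (weak* compactness of $K_j$ plus lower semicontinuity of the objective) and convex (convexity of each summand). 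Applying Fan--Glicksberg in exactly the manner used earlier in this section for the existence of Nash equilibria will then yield a fixed point $\mu^*\in K$, once I verify that $T$ has weak*-closed graph.

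Granted such a $\mu^*$, I plan to take variations $\mu_j^*\mapsto\mu_j^*+t(\nu_j-\mu_j^*)$ for $\nu_j\in K_j$ and $t\in(0,1]$, use convexity of $F_j(\cdot,\mu^*_{-j})$ together with the expansion of $\tfrac12\|(\mu_j^*-\zeta_j)+t(\nu_j-\mu_j^*)\|_{L^2}^2$, divide by $t$, and let $t\downarrow 0$ to deduce
$$F_j(\nu_j,\mu^*_{-j})\ge F_j(\mu^*)+(\zeta_j-\mu_j^*,\nu_j-\mu_j^*)$$
for every $\nu_j\in K_j$. Because $\nu_j-\mu_j^*\in X^*$, the identity \eqref{jayidentity} rewrites this inner product as $\langle\nu_j-\mu_j^*,\Jay(\zeta_j-\mu_j^*)\rangle$, which is exactly the statement $\Jay(\zeta_j-\mu_j^*)\in\partial_{\mu_j}F_j(\mu^*)$. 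In particular each $\Jay^{-1}(\partial_{\mu_j}F_j(\mu^*))$ is nonempty, so $\mu^*\in{\cal D}$ and $\zeta\in(I+A)\mu^*$.

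The main obstacle is the closed-graph verification for $T$, which reduces to weak* lower semicontinuity of $\nu_j\mapsto\|\nu_j-\zeta_j\|_{L^2}^2$ on the weak* compact set $K_j$. Here Lemma~\ref{TechLEm1} is decisive: any weak* convergent sequence in $K_j$ is norm bounded in $X^*$ (by uniform boundedness applied to the weak* compact set), hence bounded in $L^2(X,\eta)$; any weak $H$-subsequential limit must then agree $\eta$-a.e.\ with the weak* limit, so the standard lower semicontinuity of the Hilbert norm under weak convergence applies to the full sequence. Combining this with the assumed weak* continuity of $F_j$ closes the graph of $T$ and completes the Fan--Glicksberg hypothesis, finishing the proof.
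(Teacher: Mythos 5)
Your proof is correct, and its skeleton---monotonicity read off directly from \eqref{newFmonotone} via the identity \eqref{jayidentity}, then maximality through Minty's surjectivity criterion and a Fan--Glicksberg fixed point on $K$---matches the paper's. The genuine difference is the auxiliary map used in the Minty step. The paper takes $\Phi(\mu)$ to be the argmin of $\nu\mapsto\sum_{j}\langle\nu_j,\Jay(\mu_j-\sigma_j)\rangle+F_j(\nu_j,\mu_{-j})$: it freezes the quadratic dependence by evaluating $\Jay$ at the outer variable, so the inner objective is \emph{linear} in $\nu_j$ and hence weak* continuous, and the nonemptiness/convexity/closed-graph checks follow from weak* compactness of $K$ together with continuity of $F_j$ and $\Jay$. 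Your $T(\mu)$ instead keeps the full proximal penalty $\tfrac12\|\nu_j-\zeta_j\|_{L^2}^2$, which is only weak* \emph{lower semicontinuous} on $K_j$; you correctly identify this as the crux and supply the right remedy, namely that a weak* convergent sequence in the norm-bounded set $K_j$ is bounded in $L^2(X,\eta)$ and, by Lemma~\ref{TechLEm1} applied to subsequences, every weak $H$-cluster point coincides with the weak* limit, so the whole sequence converges weakly in $H$ and the Hilbert-norm lower semicontinuity applies. Both fixed points yield the same first-order condition $\Jay(\zeta_j-\mu_j^*)\in\partial_{\mu_j}F_j(\mu^*)$, hence $\mu^*\in{\cal D}$ and $\zeta\in(I+A)\mu^*$. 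What the paper's linearization buys is that the lower-semicontinuity discussion disappears entirely; what your proximal formulation buys is that the equivalence between ``fixed point of $T$'' and ``solution of $\mu+A\mu\ni\zeta$'' is the textbook resolvent computation and arguably more transparent. There is no gap in either route.
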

\begin{proof}
Suppose $\sigma\in A\mu$ and $\tilde\sigma\in A\tilde\mu$. Then
\begin{align}
(\mu-\tilde\mu,\sigma-\tilde\sigma)&=\sum^N_{j=1}(\mu_j-\tilde\mu_j,\sigma_j-\tilde\sigma_j)
=\sum^N_{j=1}\langle\mu_j-\tilde\mu_j,\Jay\sigma_j-\Jay\tilde\sigma_j\rangle\ge 0
\end{align}
as $F_1,\dots,F_N$ is monotone. Thus, $A$ is monotone.

\par In order to show that $A$ is maximal, we will appeal to Minty's lemma \cite{MR169064}. That is, it suffices to show for each $\sigma\in H^N$, there is $\mu\in K$ with $\mu+A\mu\ni \sigma.$ Note that $\mu$ is a solution if and only if 
$$
\Jay(\mu_j-\sigma_j)+\partial_{\mu_j}F_j(\mu)\ni 0
$$ 
for $j=1,\dots, N$. Furthermore, it is routine to check that $\mu$ is the desired solution if and only if $\mu\in \Phi(\mu)$, where $\Phi:K\mapsto 2^K$ is defined 
$$
\Phi(\mu):=\text{argmin}\left\{ \sum^N_{j=1}\langle \nu_j,\Jay(\mu_j-\sigma_j)\rangle+F_j(\nu_j,\mu_{-j}): \nu\in K\right\}
$$
for $\mu\in K$.

\par Note that 
$$
K\ni\nu\mapsto \sum^N_{j=1}\langle \nu_j,\Jay(\mu_j-\sigma_j)\rangle+F_j(\nu_j,\mu_{-j})
$$
is weak* continuous for each $\mu\in K$. Since $K$ is weak* compact, this function has a minimum. And as this function is convex, its set of minima is convex. Thus, $\Phi(\mu)$ is nonempty and convex. The continuity  of $F_1,\dots, F_N$ and of $\Jay$ also imply that the graph of $\Phi$ is closed. Therefore, there is $\mu\in K$ such that $\mu\in \Phi(\mu)$ by the Fan-Glicksberg theorem \cite{MR47317,MR46638}.  
\end{proof}

\par We can now verify the initial value problem  \eqref{mainIVP} has a solution whose Ces\`aro mean converges to a Nash equilibrium. 
\begin{proof}[Proof of Theorem \ref{ThmOne}]
We've established that $A$ is maximally monotone. Since $\mu^0$ belongs to the domain of $A$, Theorem 3.1 of \cite{MR0348562} implies there is a unique absolutely continuous solution $\xi:[0,\infty)\rightarrow H^N$ of the equation 
\be
\begin{cases}
\dot\xi(t)+A\xi(t)\ni 0 \text{ a.e. $t\ge 0$}\\
\xi(0)=\mu^0.
\end{cases}
\ee
Moreover, $\xi$ is Lipschitz continuous and $A\xi(t)\neq 0$ for each $t\ge 0$. It follows that $\xi(t)\in {\cal D}$ for $t\ge 0$ and that 
$$
\Jay\dot\xi_j(t)+\partial_{\mu_j}F_j(\xi(t))\ni 0\; \text{a.e. $t\ge 0$}
$$
for $j=1,\dots, N$.  Consequently, $\xi$ is a solution of the initial value problem \eqref{mainIVP} as claimed. 

\par The limit 
$$
\mu_j:=\lim_{t\rightarrow\infty}\frac{1}{t}\int^t_0\xi_j(s)ds
$$
exists weakly in $H$ by the Baillon-Br\'ezis theorem \cite{MR394328} for each $j=1,\dots, N$ and satisfies $0\in A\mu$. That is, $\partial_{\mu_j}F_j(\mu)=0$ for $j=1,\dots, N$, so $\mu$ is a Nash equilibrium of $F_1,\dots, F_N$. By Lemma \ref{TechLEm1}, this limit  also exists weak*. 
\end{proof}

\section{Symmetric games}\label{SymmetricMFGsect}
Suppose now that $K=K_1=\cdots =K_N\subset X^*$ is weak* compact, $F_1,\dots, F_N: K^N\rightarrow \R$ is continuous, and $K\ni \nu_j\mapsto F_j(\nu_j,\mu_{-j})$ is convex for each $\mu\in K^N$ and $j=1,\dots,N$. We will say that $F_1,\dots, F_N$ is {\it symmetric} provided
\be\label{FoneFNsymmetric}
F_i\underbrace{(\mu,\dots,\mu,\nu,\mu,\dots,\mu)}_{\text{$\nu$ is in the $i$th argument of $F_i$}}=F_j\underbrace{(\mu,\dots,\mu,\nu,\mu,\dots,\mu)}_{\text{$\nu$ is in the $j$th argument of $F_j$}}
\ee
for all $i,j=1,\dots, N$ and all $\mu,\nu\in K$.   With these assumptions, it can be shown that $F_1,\dots, F_N$ has a symmetric Nash equilibrium $(\mu,\dots,\mu)\in K^N.$  As we only need to find $\mu\in K$ such that  
$$
F_1(\mu,\dots, \mu)\le F_1(\nu,\mu,\dots,\mu)\;\text{for all $\mu\in K$},
$$
we can employ a simpler approximation method than discussed above. 

\par The theorem we present below will also apply to mean field equilibria which we recall are $\mu\in \Pee(S)$ that satisfy 
$$
\int_{S}f(s,\mu)d\mu(s)\le \int_{S}f(s,\mu)d\nu(s)
$$
for each $\nu \in \Pee(S)$. For static mean field games, we'll always assume $S$ is a compact metric space and $f:S\times\Pee(S)$ is continuous; here $S\times\Pee(S)$ is endowed with the product topology form the metric on $S$ and the weak* topology on $\Pee(S)$.  The key monotonicity condition that will be needed is  
\be\label{littleeffMonotone}
\int_{S}(f(s,\mu)-f(s,\nu))d(\mu-\nu)(s)\ge 0
\ee
for $\mu,\nu\in\Pee(S)$.

\par In order to address both scenarios, we will consider a weak* continuous 
$G:K\times K\rightarrow \R$
such that
$$
K\ni \nu\mapsto G(\mu,\nu) \text{ is convex for each $\nu\in K$}.
$$
We'll also say $\mu\in K$ is an {\it equilibrium} for $G$ provided
$$
G(\mu,\mu)\le G(\mu,\nu)
$$
for all $\nu\in K$. Moreover, $\mu$ is an equilibrium if and only if $0\in \partial_\nu G(\mu,\mu)$.  Here 
\be
\partial_{\nu}G(\mu,\mu)=\{x\in X:G(\nu,\mu)\ge G(\mu,\mu)+\langle\nu-\mu,x\rangle \;\text{for $\nu\in K$} \}.
\ee

\par It is not hard to see that $G$ has an equilibrium. 
\begin{lem}
There exists an equilibrium for $G$. 
\end{lem}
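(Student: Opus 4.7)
The plan is to mimic the Fan--Glicksberg argument that was used earlier in the paper to establish the existence of a Nash equilibrium, only now applied to the best response correspondence associated with $G$. Concretely, I would define the set-valued map $\Phi: K\to 2^K$ by
\[
\Phi(\mu):=\operatorname{argmin}\{G(\mu,\nu):\nu\in K\}
\]
and seek a fixed point $\mu\in\Phi(\mu)$, which unwinds to the equilibrium condition $G(\mu,\mu)\le G(\mu,\nu)$ for every $\nu\in K$.

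The bulk of the work is verifying the hypotheses of the Fan--Glicksberg theorem in the weak* topology. First, I would observe that because $K$ is weak* compact and $\nu\mapsto G(\mu,\nu)$ is weak* continuous for each fixed $\mu$, the minimum is attained, so $\Phi(\mu)$ is nonempty. Convexity of $\Phi(\mu)$ is immediate from the fact that $\nu\mapsto G(\mu,\nu)$ is convex, so its argmin is a convex subset of the convex set $K$. For the closed-graph requirement, I would take a net (or a sequence, using metrizability of the weak* topology on weak* compact subsets of $X^*$ coming from separability of $X$) $(\mu^k,\nu^k)$ with $\nu^k\in \Phi(\mu^k)$ and $(\mu^k,\nu^k)\to(\mu,\nu)$ weak*, and pass to the limit in the inequality $G(\mu^k,\nu^k)\le G(\mu^k,\tilde\nu)$ valid for all $\tilde\nu\in K$, using the (joint) weak* continuity of $G$ to conclude $\nu\in\Phi(\mu)$.

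With these three properties in hand, the Fan--Glicksberg fixed point theorem \cite{MR47317,MR46638}, already invoked earlier for the existence of a Nash equilibrium, furnishes some $\mu\in K$ with $\mu\in\Phi(\mu)$, and this is precisely the desired equilibrium for $G$.

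The only subtlety I anticipate is the closed-graph step, since it relies on the joint weak* continuity of $G$ rather than separate continuity; fortunately this is exactly what is assumed. Everything else is a verbatim repetition of the Nash equilibrium existence argument already carried out in the paper, specialized from a collection $F_1,\dots,F_N$ to the single bivariate function $G$.
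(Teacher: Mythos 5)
Your proposal follows exactly the same route as the paper: the paper's proof also reduces the equilibrium condition to a fixed point of the best-response map $\sigma \mapsto \operatorname{argmin}\{G(\sigma,\nu):\nu\in K\}$ and invokes the Fan--Glicksberg theorem, leaving the verification of nonemptiness, convexity, and the closed graph as routine. Your write-up simply spells out those routine checks correctly, so it is a correct proof and essentially identical in approach.
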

\begin{proof}
Note that $\mu$ is an equilibrium for $G$ if and only if $\mu$ is a fixed point of the mapping from $K$ into $2^K$ given by 
$$
K\ni\sigma \mapsto \text{argmin}\left\{G(\sigma,\nu): \nu\in K\right\}.
$$
With the continuity and convexity assumptions made on $G$, it is routine to show this mapping has a fixed point by applying the 
Fan-Glicksberg theorem \cite{MR47317,MR46638}.
\end{proof}

\subsection{Monotonicity of $G$}
 We will say that $G$ is {\it monotone} provided
\be\label{EffMon}
\langle \mu-\nu , x-y\rangle \ge 0
\ee
whenever $x\in   \partial_\nu G(\mu,\mu)$ and $y\in  \partial_\nu G(\nu,\nu)$.
As in the case of $N$-player games,  it sometimes is useful to identify a simple sufficient condition for monotonicity.  
\begin{lem}\label{GmonCond}
Suppose
\be\label{EffMonTwo}
G(\mu,\mu)+G(\nu,\nu)\ge G(\mu,\nu)+G(\nu,\mu)
\ee
for all $\mu,\nu\in K$. Then $G$ is monotone. 
\end{lem}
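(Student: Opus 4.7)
The plan is to imitate the proof of Proposition \ref{OneMonImpliesOther}: the assumption \eqref{EffMonTwo} plays the same role for the single function $G$ that \eqref{FeyeMonotoneTwo} played for the $N$-tuple $F_1,\dots,F_N$, and the argument collapses to a single pair of subgradient inequalities.

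First I would unpack the hypotheses on $x$ and $y$. Since $x\in \partial_\nu G(\mu,\mu)$, the defining subgradient inequality applied with $\nu$ as the test point gives a lower bound for one of the cross values in terms of $G(\mu,\mu)$ and $\langle \nu-\mu, x\rangle$. Symmetrically, $y\in \partial_\nu G(\nu,\nu)$ applied with $\mu$ as the test point gives a lower bound for the other cross value in terms of $G(\nu,\nu)$ and $\langle \mu-\nu, y\rangle$.

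Then I would add the two inequalities and rearrange. The linear terms combine into $\langle \nu-\mu,\,x-y\rangle$, and so the sum takes the form
\[
\langle \mu-\nu,\, x-y\rangle \ge \bigl(G(\mu,\mu)+G(\nu,\nu)\bigr) - \bigl(G(\mu,\nu)+G(\nu,\mu)\bigr).
\]
Invoking \eqref{EffMonTwo}, the right-hand side is $\ge 0$, which is precisely the monotonicity condition \eqref{EffMon}.

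There is no real obstacle here, since the result is essentially the single-function analogue of Proposition \ref{OneMonImpliesOther}. The only thing to be careful about is matching which test point gets substituted into each subgradient inequality, so that after summation the cross terms $G(\mu,\nu)$ and $G(\nu,\mu)$ appear explicitly and can be controlled by the diagonal dominance hypothesis \eqref{EffMonTwo}.
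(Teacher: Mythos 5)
Your proposal is correct and follows essentially the same route as the paper's proof: apply the subgradient inequality for $x\in\partial_\nu G(\mu,\mu)$ at the test point $\nu$ and for $y\in\partial_\nu G(\nu,\nu)$ at the test point $\mu$, add, and invoke \eqref{EffMonTwo} to conclude $\langle \mu-\nu, x-y\rangle\ge 0$. The bookkeeping of the cross terms is exactly as in the paper, so there is nothing to add.
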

\begin{proof}
Let $x\in   \partial_\nu G(\mu,\mu)$ and $y\in  \partial_\nu G(\nu,\nu)$. Then 
$$
G(\mu,\nu)\ge G(\mu,\mu)+\langle x, \nu-\mu\rangle 
$$
and 
$$
G(\nu,\mu)\ge G(\nu,\nu)+\langle y, \mu-\nu\rangle. 
$$
Adding these inequalities gives
$$
G(\mu,\nu)+G(\nu,\mu)\ge G(\mu,\mu)+G(\nu,\nu)-\langle x-y, \mu-\nu\rangle.
$$
In view of \eqref{EffMonTwo}, we conclude that \eqref{EffMon} holds. 
\end{proof}

\begin{ex} One of the model cases occurs when $F_1,\dots, F_N$ is symmetric.  Here the relevant $G$ function is  
$$
G(\mu,\nu)=F_1(\nu,\mu,\dots, \mu)
$$
for $\mu,\nu\in K$.  It is routine to verify that if the collection $F_1,\dots, F_N$ is additionally monotone, then $G$ is monotone. Using Lemma \ref{GmonCond}, it is also possible to show that a sufficient condition for the monotonicity of $G$ is 
$$
F_1(\mu,\dots, \mu)+F_1(\nu,\dots, \nu)\ge F_1(\nu,\mu,\dots, \mu)+F_1(\mu,\nu,\dots, \nu)
$$
for $\mu,\nu\in K$. 
\end{ex}

\par Let us also briefly consider the case of a static mean field game $f: S\times \Pee(S)\rightarrow \R$. 
Here 
$$
G(\mu,\nu)=\int_Sf(s,\mu)d\nu(s)\quad (\mu,\nu\in \Pee(S))
$$
is monotone provided \eqref{littleeffMonotone} holds.  We will look at a few examples of $f$ below which satisfy this monotonicity condition.
\begin{ex}
$f(x,\mu)=\varphi(x)$ for any continuous $\varphi:S\rightarrow \R$.  This example clearly satisfies \eqref{littleeffMonotone}. Note that any minimizer $s\in S$ of $\varphi$ corresponds to the mean field equilibrium $\mu=\delta_s$. 
\end{ex}
\begin{ex}
Consider 
$$
f(s,\mu)=\displaystyle\int_Sk(s,t)d\mu(t)
$$
for any continuous, symmetric, and nonnegative definite kernel $k:S\times S\rightarrow \R$. That is, for any $s_1,\dots, s_N\in S$ and $c_1,\dots, c_N\in \R$, 
$$
\sum_{i,j=1}^Nk(s_i,s_j)c_ic_j\ge 0.
$$
It is routine to check that these assumptions imply $f$ satisfies \eqref{littleeffMonotone}.
\end{ex}
\begin{ex}
Suppose $\ell$ is a positive Borel measure on $S$ with $\ell(S)>0$. Set
$$
f(s,\mu)=\psi(\rho(s)), \quad (s\in S)
$$
whenever $d\mu/d\ell=\rho$.  Here $\psi$ is an increasing function on $\R$.  Of course, $f$ will not in general be continuous. However, if $d\nu=\sigma /d\ell $, then 
$$
\int_S(f(s,\mu)-f(s,\nu))d(\mu-\nu)(s)=\int_S(\psi(\rho(s))-\psi(\sigma(s)))(\rho(s)-\sigma(s))d\ell(s)\ge 0.
$$
It is also evident that the density
$$
\rho(s)=\frac{1}{\ell(S)}\quad (s\in S)
$$
defines a mean field equilibrium. 
\end{ex}
\subsection{Another flow of mixed strategies}
Suppose $G$ is monotone and $\eta$ is a Gaussian measure on $X$.  We now will show how to approximate an equilibrium of $G$.  For a given $\mu^0\in K$, we consider the initial value problem: find an absolutely continuous $\zeta: [0,\infty)\rightarrow H$ such that 
\be\label{MeanFieldIVP}
\begin{cases}
\Jay\dot\zeta(t)+\partial_{\nu}G(\zeta(t),\zeta(t))\ni 0\quad \text{a.e. $t\ge 0$}\\
\;\zeta(0)=\mu^0.
\end{cases}
\ee
Here $\Jay: H\rightarrow X$ is the linear mapping \eqref{jayFormula}.

\par Note that if $\zeta$ and $\chi$ are two solutions of \eqref{MeanFieldIVP} perhaps with distinct initial conditions $\zeta(0)=\zeta^0$ and $\chi(0)=\chi^0$ then
\begin{align}
\frac{d}{dt}\frac{1}{2}\|\zeta(t)-\chi(t)\|^2_{L^2}&=(\zeta(t)-\chi(t),\dot\zeta(t)-\dot\chi(t))\\
&=\langle\zeta(t)-\chi(t),\Jay\dot\zeta(t)-\Jay\dot\chi(t)\rangle\\
&\le 0
\end{align}
for almost every $t\ge 0$. The inequality above follows by our monotonicity assumption on $G$.  Therefore, 
$$
\|\zeta(t)-\chi(t)\|_{L^2}\le \|\zeta^0-\chi^0\|_{L^2}
$$
for $t\ge 0$, and we expect \eqref{MeanFieldIVP} to generate a contraction semigroup in $H$. 

\par In order to establish these claims, we will introduce the operator $B:H\rightarrow 2^H$ defined by 
$$
B\mu:=
\begin{cases}
\Jay^{-1}\left(\partial_\nu G(\mu,\mu)\right), \quad &\mu\in {\cal C}\\
\emptyset, \quad &\mu\not\in {\cal C}.
\end{cases}
$$
Here
$$
{\cal C}=\Big\{\mu\in K: \Jay^{-1}\left(\partial_\nu G(\mu,\mu)\right)\neq \emptyset\Big\},
$$
and we note that $\sigma\in B\mu$ provided $\mu\in K$ and $\Jay\sigma\in \partial_\nu G(\mu,\mu)$.
\begin{lem}
$B$ is maximally monotone. 
\end{lem}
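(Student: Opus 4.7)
The plan is to adapt the proof of maximality of the operator $A$ from the preceding section. Monotonicity of $B$ is essentially immediate: if $\sigma\in B\mu$ and $\tilde\sigma\in B\tilde\mu$, then $\Jay\sigma\in\partial_\nu G(\mu,\mu)$ and $\Jay\tilde\sigma\in\partial_\nu G(\tilde\mu,\tilde\mu)$, so by the defining identity \eqref{jayidentity} together with the hypothesis \eqref{EffMon},
$$
(\mu-\tilde\mu,\sigma-\tilde\sigma)=\langle\mu-\tilde\mu,\Jay\sigma-\Jay\tilde\sigma\rangle\ge 0.
$$

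For maximality I invoke Minty's lemma, so it suffices to exhibit, for each $\sigma\in H$, some $\mu\in K$ with $\sigma\in\mu+B\mu$, i.e.\ $\Jay(\sigma-\mu)\in\partial_\nu G(\mu,\mu)$. Unpacking the definition of the subdifferential and rearranging shows this inclusion is equivalent to $\mu$ being a minimizer on $K$ of $\nu\mapsto G(\mu,\nu)+\langle\nu,\Jay(\mu-\sigma)\rangle$, hence a fixed point of the set-valued map
$$
\Phi(\mu):=\text{argmin}\left\{G(\mu,\nu)+\langle\nu,\Jay(\mu-\sigma)\rangle:\nu\in K\right\}.
$$
For each fixed $\mu\in K$, the functional inside the argmin is weak* continuous on $K$ (since $\Jay(\mu-\sigma)$ is a fixed element of $X$) and convex in $\nu$, so by weak* compactness of $K$ the set $\Phi(\mu)$ is nonempty and convex. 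Existence of the desired fixed point $\mu$ will then follow from the Fan--Glicksberg theorem, exactly as in the proof for $A$.

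The main obstacle is verifying that the graph of $\Phi$ is closed in $K\times K$ under the weak* topology, because the pairing $\langle\nu^n,\Jay(\mu^n-\sigma)\rangle$ involves two simultaneously varying sequences that converge only weak*. I plan to handle this via the auxiliary fact that $\Jay\mu^n\to\Jay\mu$ \emph{strongly} in $X$ whenever $\mu^n\to\mu$ weak* in the bounded set $K$: the compact embedding $X^*\subset H$ yields subsequential norm convergence in $H$, Lemma~\ref{TechLEm1} identifies the limit as $\mu$, and continuity of $\Jay:H\to X$ then gives strong $X$ convergence along the full sequence. Pairing this strong convergence against the weak*-bounded sequence $\nu^n$, and using the weak* continuity of $G$ to pass to the limit in $G(\mu^n,\nu^n)$ and $G(\mu^n,\tau)$ for arbitrary test points $\tau\in K$, one obtains the limiting inequality $G(\mu,\nu)+\langle\nu,\Jay(\mu-\sigma)\rangle\le G(\mu,\tau)+\langle\tau,\Jay(\mu-\sigma)\rangle$ for all $\tau\in K$, showing $\nu\in\Phi(\mu)$ and closing the graph.
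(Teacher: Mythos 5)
Your proof is correct and follows essentially the same route as the paper: monotonicity of $B$ from \eqref{jayidentity} and \eqref{EffMon}, then maximality via Minty's lemma by finding a Fan--Glicksberg fixed point of exactly the same argmin map (the paper's $\Psi$). The only difference is that you spell out the closed-graph verification that the paper dismisses as routine, and your strong-convergence argument for $\Jay\mu^n$ via the compact embedding and Lemma~\ref{TechLEm1} does that job correctly.
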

\begin{proof}
Let $\sigma\in B\mu$ and $\tilde\sigma\in B\tilde\mu$. Since $G$ is monotone 
\be
(\sigma-\tilde\sigma,\mu-\tilde\mu)=\langle\sigma-\tilde\sigma,\Jay\mu-\Jay\tilde\mu\rangle\ge 0.
\ee
Thus, $B$ is monotone.  

\par Suppose that $\sigma\in H$. We claim that there is $\mu\in K$ for which 
$\mu+B\mu\in\sigma$.   This is equivalent to the condition $\mu\in \Psi(\mu)$, where $\Psi: K\rightarrow 2^K$ is defined
$$
\Psi(\mu)=\text{argmin}\left\{ \langle\nu, \Jay(\mu-\sigma)\rangle +G(\mu,\nu): \nu\in K\right\}.
$$
It is routine to check that $\Psi(\mu)$ is nonempty and convex for each $\mu\in K$ and that the graph of $\Psi$ is closed. 
It then follows from the Fan-Glicksberg theorem \cite{MR47317,MR46638} that $\Psi$ has a fixed point. We conclude that 
$B$ is maximal by Minty's lemma. 
\end{proof}
Our second approximation theorem is as follows. 
\begin{thm}\label{ThmTwo}
Suppose $\mu^0\in {\cal C}$.  There is a unique absolutely continuous $\zeta: [0,\infty)\rightarrow H$ with $\zeta(t)\in {\cal C}$ for all $t\ge 0$ that satisfies \eqref{MeanFieldIVP}.  Furthermore, $\zeta$ is Lipschitz continuous and
$$
\frac{1}{t}\int^t_0\zeta(s)ds
$$
converges weak* to an equilibrium of $G$ as $t\rightarrow\infty$.
\end{thm}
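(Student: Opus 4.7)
The plan is to mimic the proof of Theorem \ref{ThmOne} almost verbatim, but with the operator $B$ in place of $A$ and with scalar (rather than $N$-fold product) inner product bookkeeping. All the conceptual work has already been carried out: $B$ was introduced and shown to be maximally monotone in the preceding lemma, $\Jay$ plays the same mediating role between $H$ and $X$, and the Baillon--Br\'ezis and Minty frameworks extend without change.

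First I would invoke Theorem 3.1 of \cite{MR0348562}. Since $\mu^0 \in {\cal C}$, $\mu^0$ lies in the domain of the maximally monotone operator $B$, so there is a unique absolutely continuous $\zeta:[0,\infty)\rightarrow H$ with
\begin{equation}
\begin{cases}
\dot\zeta(t)+B\zeta(t)\ni 0 &\text{a.e. $t\ge 0$}\\
\zeta(0)=\mu^0.
\end{cases}
\end{equation}
The general theory in \cite{MR0348562} also yields that $\zeta$ is Lipschitz continuous on $[0,\infty)$ and that $B\zeta(t)\neq\emptyset$ for every $t\ge 0$. Unwinding the definition of $B$, the nonemptiness of $B\zeta(t)$ forces $\zeta(t)\in{\cal C}\subset K$, and the inclusion $\dot\zeta(t)+B\zeta(t)\ni 0$ becomes $\Jay\dot\zeta(t)+\partial_\nu G(\zeta(t),\zeta(t))\ni 0$, which is exactly \eqref{MeanFieldIVP}. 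This handles existence, uniqueness, and the regularity assertions.

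Next I would tackle the Ces\`aro mean. By the Baillon--Br\'ezis theorem \cite{MR394328}, because $B$ is maximally monotone and possesses a zero (the equilibrium of $G$ produced by the lemma preceding Section 4.1 is precisely such a zero of $B$), the weak limit
\begin{equation}
\mu:=\lim_{t\rightarrow\infty}\frac{1}{t}\int^t_0\zeta(s)ds
\end{equation}
exists in $H$ and satisfies $0\in B\mu$. By definition of $B$ this forces $\mu\in K$ and $0\in\partial_\nu G(\mu,\mu)$, i.e. $\mu$ is an equilibrium of $G$. Finally, the Ces\`aro averages $\frac{1}{t}\int_0^t\zeta(s)ds$ are convex combinations of values of $\zeta$, which all lie in the weak* compact (hence dual norm bounded) set $K\subset X^*$, so they form a bounded sequence in $X^*$ that converges weakly in $H$ to $\mu$. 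Lemma \ref{TechLEm1} then upgrades this convergence to weak* convergence in $X^*$, completing the proof.

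I do not anticipate a real obstacle: every ingredient needed has been built in the preceding subsections, and the only subtle point is invoking Lemma \ref{TechLEm1} to pass from weak $H$-convergence of the averages to the weak* convergence claimed in the statement. The one thing worth double-checking is that the Baillon--Br\'ezis hypothesis (existence of a zero of $B$) is met, but this follows immediately from the existence of an equilibrium of $G$ established earlier in the section.
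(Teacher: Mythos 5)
Your proposal is correct and follows essentially the same route as the paper's proof: invoke Theorem 3.1 of \cite{MR0348562} for the maximally monotone operator $B$ with $\mu^0$ in its domain, unwind the definition of $B$ to recover \eqref{MeanFieldIVP}, apply the Baillon--Br\'ezis theorem to the Ces\`aro mean, and use Lemma \ref{TechLEm1} to upgrade weak convergence in $H$ to weak* convergence. Your two added remarks---that the Baillon--Br\'ezis hypothesis (existence of a zero of $B$) is supplied by the equilibrium existence lemma, and that the averages stay in the bounded set $K$ so Lemma \ref{TechLEm1} applies---are details the paper leaves implicit, and they are both correct.
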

\begin{proof}
By Theorem 3.1 of \cite{MR0348562}, there is a unique absolutely continuous $\zeta: [0,\infty)\rightarrow H$ such that 
\be
\begin{cases}
\dot\zeta(t)+B\zeta(t)\ni 0 \text{ a.e. $t\ge 0$} \\
\zeta (0)=\mu^0.
\end{cases}
\ee
Moreover, $\zeta$ is Lipschitz continuous and $B\zeta(t)\neq \emptyset$ for $t\ge 0$. By design, $\zeta$ also solves the initial value problem \eqref{MeanFieldIVP}. In view of the Baillon-Br\'ezis theorem \cite{MR394328}, the limit 
$$
\mu:=\lim_{t\rightarrow\infty}\frac{1}{t}\int^t_0\zeta(s)ds
$$
exists weakly in $H$ and satisfies $0\in B\mu$. Thus, $\mu$ is an equilibrium of $G$.  The above limit also occurs weak* by Lemma \ref{TechLEm1}. 
\end{proof}

\begin{ex}
Suppose $F_1,\dots, F_N$ is monotone and symmetric. In order to approximate a symmetric Nash equilibrium,  we can use a solution $\zeta:[0,\infty)\rightarrow H$ of
\be
\begin{cases}
\Jay\dot\zeta(t)+\partial_{\mu_1}F_1(\zeta(t),\dots,\zeta(t))\ni 0\quad \text{a.e. $t\ge 0$}\\
\;\zeta(0)=\mu^0.
\end{cases}
\ee
According to Theorem \ref{ThmTwo}, there is a solution whose Ces\`aro mean converges weak* to a symmetric Nash equilibrium $\mu$  provided that  
$$
\Jay\sigma\in\partial_{\mu_1}F_1(\mu^0,\dots,\mu^0)
$$
for some $\sigma\in H$. 
\end{ex}
\begin{ex}
Let us consider a static mean field game $f: S\times\Pee(S)\rightarrow \R$ such that \eqref{littleeffMonotone} holds. By Theorem  \ref{ThmTwo}, there is a path $\zeta:[0,\infty)\rightarrow H$ which satisfies 
\be\label{MFGflow}
\begin{cases}
\langle \nu-\zeta(t),\Jay\dot\zeta(t)+ f(\cdot,\zeta(t))\rangle\ge 0\quad \text{for a.e. $t\ge 0$ and all $\nu\in \Pee(S)$}\\
\;\zeta(0)=\mu^0,
\end{cases}
\ee
provided $\mu^0\in K$ and there is $\sigma\in H$ such that
$$
\langle\nu-\mu^0,-\Jay\sigma+ f(\cdot,\mu^0)\rangle\ge 0 \quad\text{for all $\nu\in \Pee(S)$}.
$$ 
Moreover, the Ces\`aro mean of $\zeta$ converges weak* to a mean field equilibrium as $t\rightarrow \infty$. \end{ex}

\appendix

\section{Finite action sets}
We will consider a particular Gaussian measure on $X=C(S)$ with
$$
S=\{s_1,\dots,s_m\}.
$$
These considerations will be used to show how our general theory applies to games with finite action sets. In particular, 
we will informally argue below that the abstract flows considered in this paper reduce to much simpler flows on finite dimensional
spaces. 

\par To this end, it will be convenient to define $e_1,\dots, e_m: S\rightarrow \R$ via
$$
e_j(s_i)=\delta_{ij}\text{ for $i,j=1,\dots,m$}.
$$
This allows us to represent each $f\in C(S)$ and $\mu\in M(S)$ as
$$
f=\sum^m_{j=1}f(s_j)e_j\quad\text{and}\quad\mu=\sum^m_{j=1}\mu(e_j)\delta_{s_j}.
$$
These representations can be used to verify that $C(S)$ is isometrically isomorphic to $\R^m$ endowed with the $\infty$-norm
and that $M(S)$ is isometrically isomorphic to $\R^m$ endowed with the $1$-norm. It is also routine is to show that if $\mu\in \Pee(S)$, then
$$
(\mu(e_1),\dots,\mu(e_m))\in \Delta_m.
$$

\par  We will consider the Borel probability measure $\gamma$ on $C(S)$ defined as
\be\label{GammaMeas}
\int_{C(S)}hd\gamma=\int_{\R^m}h\left(\sum^m_{j=1}x_je_j\right)\frac{1}{(2\pi)^{m/2}}e^{-\frac{1}{2}|x|^2}dx
\ee
for continuous and bounded $h: C(S)\rightarrow \R$. 
\begin{prop}
$\gamma$ is a Gaussian measure. Moreover, 
\be\label{FiniteDimInner}
(\mu,\nu)=\sum_{j=1}^m\mu(e_j)\nu(e_j)
\ee
for $\mu,\nu\in M(S)$, and
\be\label{JayFiniteDim}
\Jay\left(\sum^m_{j=1}\nu(e_j)\delta_{s_j}\right)=\sum_{j=1}^m\nu(e_j)e_j
\ee
for $\nu\in M(S)$.
\end{prop}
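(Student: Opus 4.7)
The plan is to use the linear isomorphism $C(S)\cong \R^m$ given by the basis $\{e_1,\dots, e_m\}$ to push every computation down to the standard Gaussian on $\R^m$, after which the three claims become one-line consequences of elementary moment identities.

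First, I would show that $\gamma$ is a Gaussian measure. Fix $\mu\in M(S)\setminus\{0\}$; then $\mu=\sum_{j=1}^m\mu(e_j)\delta_{s_j}$, so for $f=\sum_{j=1}^m x_je_j\in C(S)$ we have $\mu(f)=\sum_{j=1}^m\mu(e_j)x_j=a\cdot x$, where $a=(\mu(e_1),\dots,\mu(e_m))\in\R^m\setminus\{0\}$. Applying \eqref{GammaMeas} with $h(f)=g(\mu(f))$ for bounded continuous $g:\R\to\R$ gives
\begin{equation}
\int_{C(S)} g(\mu(f))\,d\gamma(f)=\int_{\R^m} g(a\cdot x)\,\frac{e^{-|x|^2/2}}{(2\pi)^{m/2}}\,dx=\int_{\R}g(y)\,\frac{e^{-y^2/(2|a|^2)}}{\sqrt{2\pi|a|^2}}\,dy,
\end{equation}
the last equality being the standard fact that the law of $a\cdot X$ under the standard $m$-dimensional Gaussian is $N(0,|a|^2)$. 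Since $|a|^2>0$, the pushforward is a centered nondegenerate Gaussian on $\R$, so $\gamma$ is Gaussian.

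Next, I would verify \eqref{FiniteDimInner}. For $\mu,\nu\in M(S)$, writing $f=\sum_j x_je_j$ and using \eqref{GammaMeas} twice,
\begin{equation}
(\mu,\nu)=\int_{C(S)}\mu(f)\nu(f)\,d\gamma(f)=\int_{\R^m}\Bigl(\sum_{j}\mu(e_j)x_j\Bigr)\Bigl(\sum_{k}\nu(e_k)x_k\Bigr)\,\frac{e^{-|x|^2/2}}{(2\pi)^{m/2}}\,dx.
\end{equation}
The second-moment identity $\int_{\R^m} x_jx_k\,(2\pi)^{-m/2}e^{-|x|^2/2}dx=\delta_{jk}$ collapses the double sum to $\sum_{j}\mu(e_j)\nu(e_j)$, as desired.

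Finally, for \eqref{JayFiniteDim} I would compute the Bochner integral in \eqref{jayFormula} coordinate-wise. Since $C(S)$ is finite-dimensional, the Bochner integral of $f\,\nu(f)=\sum_{j}x_j\nu(f)\,e_j$ with respect to $\gamma$ equals $\sum_{j}\bigl(\int_{C(S)}x_j\nu(f)\,d\gamma(f)\bigr)e_j$, and by \eqref{GammaMeas},
\begin{equation}
\int_{C(S)}x_j\nu(f)\,d\gamma(f)=\sum_{k}\nu(e_k)\int_{\R^m}x_jx_k\,\frac{e^{-|x|^2/2}}{(2\pi)^{m/2}}\,dx=\nu(e_j).
\end{equation}
Therefore $\Jay\nu=\sum_{j}\nu(e_j)e_j$, which is \eqref{JayFiniteDim} after using the representation $\nu=\sum_{j}\nu(e_j)\delta_{s_j}$. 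There is no real obstacle here; the only point to be careful about is the interchange of the Bochner integral with the finite sum in the last step, which is justified by the continuity of the coordinate projections on the finite-dimensional space $C(S)$.
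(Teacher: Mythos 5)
Your proof is correct and follows essentially the same route as the paper: both reduce everything to the standard Gaussian on $\R^m$ via the basis $e_1,\dots,e_m$ (the paper makes the law of $a\cdot x$ explicit using rotation invariance, you cite it as standard) and both use the second-moment identity $\int_{\R^m}x_jx_k\,(2\pi)^{-m/2}e^{-|x|^2/2}dx=\delta_{jk}$ for \eqref{FiniteDimInner}. The only immaterial difference is in \eqref{JayFiniteDim}, where you evaluate the Bochner integral coordinate-wise while the paper deduces the formula from \eqref{FiniteDimInner} combined with the duality identity \eqref{jayidentity}.
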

\begin{proof}
Suppose $g: \R\rightarrow \R$ is bounded and continuous and $c=(c_1,\dots,c_m)\in \R^m\setminus\{0\}$. Observe that 
\begin{align}
\int_{C(S)}g\left(\sum^m_{j=1}c_j\delta_{s_j}\right)d\gamma&=\int_{\R^m}g\left(\sum^m_{j=1}c_jx_j\right)\frac{1}{(2\pi)^{m/2}}e^{-\frac{1}{2}|x|^2}dx\\
&=\int_{\R^m}g\left(c\cdot x\right)\frac{1}{(2\pi)^{m/2}}e^{-\frac{1}{2}|x|^2}dx\\
&=\int_{\R^m}g\left(|c|x_1\right)\frac{1}{(2\pi)^{m/2}}e^{-\frac{1}{2}|x|^2}dx\\
&=\int_{\R}g\left(|c|x_1\right)\frac{1}{(2\pi)^{1/2}}e^{-\frac{1}{2}x_1^2}dx_1\\
&=\int_{\R}g\left(y\right)\frac{1}{(2\pi)^{1/2}|c|}e^{-\frac{1}{2|c|^2}y^2}dy.
\end{align} 
Thus, $\gamma$ is Gaussian measure.  Also note
\begin{align}
(\mu,\nu)&=\sum_{i,j=1}^m\mu(e_i)\nu(e_j)\int_{C(S)}\delta_{s_i}\delta_{s_j}d\eta \\
              &=\sum_{i,j=1}^m\mu(e_i)\nu(e_j)\int_{\R^m}x_ix_j\frac{e^{-\frac{1}{2}x\cdot x}}{(2\pi)^{m/2}}dx \\
              &=\sum_{i,j=1}^m\mu(e_i)\nu(e_j)\delta_{ij}\\
              &=\sum_{j=1}^m\mu(e_j)\nu(e_j).
\end{align}
This verifies \eqref{FiniteDimInner}.  Formula \eqref{JayFiniteDim} follows from \eqref{FiniteDimInner} and the identity \eqref{jayidentity}.
\end{proof}

\par {\bf Finite action spaces for $N$-player games}.   Suppose $F_j:\Pee(S)^N\rightarrow \R$ is continuous and that
$\nu_j\mapsto F_j(\nu_j,\mu_{-j})$ is convex for each $\mu\in \Pee(S)^N$ and $j=1,\dots, N$. We wish to express the system 
\be\label{xiSystem}
\Jay \dot \xi_j(t)+\partial_{\mu_j}F_j(\xi(t))\ni 0
\ee
$(j=1,\dots, N)$ more concretely.  With this goal in mind, we set 
$$
g_j(x_1,\dots,x_N):=F_j\left(\sum^m_{k=1}x_{1,k}\delta_{s_k},\dots,\sum^m_{k=1}x_{N,k}\delta_{s_k}\right)
$$
for $x_i=(x_{i,1},\dots, x_{i,m})\in \Delta_m$ and $i=1,\dots, N$. We note $g_j$ is continuous and that 
$y_j\mapsto g_j(y_j,x_{-j})$ is convex for each $x\in\Delta_m^N$ and $j=1,\dots, N$.  

\par For a given $x\in \Delta_m^M$ and $j=1,\dots, N$, set
$$
\partial_{x_j}g_j(x):=\Big\{z_j\in \R^m: g_j(y_j,x_{-j})\ge g_j(x)+ z_j\cdot (y_j-x_j)\;\text{for all $y_j\in \Delta_m$}\Big\}.
$$
It is not hard to see that if $\mu_i=\sum^m_{k=1}x_{i,k}\delta_{s_k}$ for $x_i\in \Delta_m$ and $i=1,\dots, N$, then
$$
\sum^N_{k=1}z_{j,k}e_k\in \partial_{\mu_j}F_j(\mu)\text{ if and only if  } z_j\in \partial_{x_j}g_j(x).
$$
\par It follows that the system \eqref{xiSystem} is equivalent to 
\be\label{giSystem}
\dot u_j(t)+\partial_{x_j}g_j(u(t))\ni 0
\ee
($j=1,\dots, N$) for $u:[0,\infty)\rightarrow \Delta_m^N$. That is 
$$
\xi_j(t)=\sum^m_{k=1}u_{j,k}(t)\delta_{s_k}
$$
would solve \eqref{xiSystem} and vice versa.  We finally note that the collection $F_1,\dots, F_N$ is monotone if and only if 
$$
\sum^N_{j=1}( x_j-y_j)\cdot (\partial_{x_j}g_j(x)-\partial_{x_j}g_j(y))\ge 0
$$
for all $x,y\in \Delta_m^N$.
\\
\par {\bf Finite action spaces in mean field games}. If $f:S\times \Pee(S)\rightarrow \R$ is continuous, then
$$
g_j(x):=f\left(s_j,\sum^m_{i=1}x_i\delta_{s_i}\right)\quad (x\in \Delta_m)
$$
is continuous for each $j=1,\dots, m$.  We aim to reinterpret the condition
\be\label{zetaFlow}
\langle \nu-\zeta(t),\Jay\dot\zeta(t)+ f(\cdot,\zeta(t))\rangle\ge 0\text{ for $\nu\in \Pee(S)$}
\ee
in terms of $g_1,\dots, g_m$. 

\par Observe that if  
$$
\nu=\sum^m_{j=1}y_j\delta_{s_j}\quad\text{and}\quad\zeta(t)=\sum^m_{j=1}u_j(t)\delta_{s_j},
$$
then 
\begin{align}
\langle \nu-\zeta(t),\Jay\dot\zeta(t)+ f(\cdot,\zeta(t))\rangle&=
\sum^m_{j=1}(y_j-u_j(t))(\dot u_j(t)+g_j(u(t)))\\
&=(y-u(t))\cdot (\dot u(t)+g(u(t))).
\end{align}
Here we have written $g=(g_1,\dots,g_m)$. As a result, 
$$
(y-u(t))\cdot (\dot u(t)+g(u(t)))\ge 0\text{ for $y\in \Delta_m$}
$$
In particular, this evolution is equivalent to \eqref{zetaFlow}.  Finally, we note that $f$ is monotone in the sense of \eqref{littleeffMonotone} if and only if 
$$
(g(x)-g(y))\cdot (x-y)\ge 0
$$
for $x,y\in \Delta_m$.

\section{An explicit example}
We will work out an example which suggests Ces\`aro mean convergence is the best one may expect from the type of flows considered in this note.  Let us assume that $N=2$ and the cost functions are
$$
\begin{cases}
F_1(x_1,x_2)=3x_{1,1}x_{2,1} +x_{1,2}x_{2,1}+4x_{2,2}x_{1,2}\\
F_2(x_1,x_2)=-3x_{1,1}x_{2,1} -x_{1,2}x_{2,1}-4x_{1,2}x_{2,2}
\end{cases}
$$
for $x_i=(x_{i,1},x_{i,2})\in \Delta_2$ for $i=1,2$.  Note this is a zero-sum game and $\Delta_2\subset \R^2$, where $\R^2$ is equipped with the standard dot product. 
It is not hard to check that the unique Nash equilibrium for $F_1, F_2$ is the pair
$$
((1/2,1/2), (2/3,1/3))\in \Delta_2^2.
$$

\par {\bf Evolution inequalities}. The corresponding flow takes the form 
$$
\left(\begin{array}{cc}
\dot u_{1,1}(t)+ 3u_{2,1}(t)\\
\dot u_{1,2}(t)+ u_{2,1}(t)+4u_{2,2}(t)
\end{array}
\right)\cdot \left(\begin{array}{cc}
z_{1,1}-u_{1,1}(t)\\
z_{1,2}-u_{1,2}(t)\\
\end{array}
\right)\ge 0
$$
and 
$$
 \left(\begin{array}{cc}
\dot u_{2,1}(t)- 3u_{1,1}(t)-u_{1,2}(t)\\
\dot u_{2,2}(t)- 4u_{1,2}(t)
\end{array}
\right)\cdot \left(\begin{array}{cc}
z_{2,1}-u_{2,1}(t)\\
z_{2,2}-u_{2,2}(t)\\
\end{array}
\right)\ge 0
$$
for almost every $t\ge 0$ and each $z_1,z_2\in \Delta_2$.  The unknown is an absolutely continuous path $u:[0,\infty)\rightarrow \Delta_2^2$, where $u(t)=(u_1(t),u_2(t))$.

\par If we put
\be
v_1(t)=u_{1,1}(t),  \; v_2(t)=u_{2,1}(t),\; w_1=z_{1,1},\;\text{and}\; w_2=z_{2,1},
\ee
we can reexpress the above inequalities as
\be
\left(\begin{array}{cc}
\dot v_1(t)+ 3v_2(t)\\
-\dot v_1(t)+ v_2(t)+4(1-v_2(t))
\end{array}
\right)\cdot \left(\begin{array}{cc}
w_1-v_1(t)\\
-(w_1-v_1(t))\\
\end{array}
\right)=(2\dot v_1(t)+6v_2(t)-4)(w_1-v_1(t))\ge 0
\ee
and
\be
\left(\begin{array}{cc}
\dot v_2(t)- 3v_1(t)-(1-v_1(t))\\
-\dot v_2(t)-4(1-v_1(t))\ge 0
\end{array}
\right)\cdot \left(\begin{array}{cc}
w_2-v_2(t)\\
-(w_2-v_2(t))\\
\end{array}
\right)=(2\dot v_2(t)+3-6v_1(t))(w_2-v_2(t))\ge 0.
\ee
Therefore, our initial value problem is equivalent to finding an absolutely continuous pair $v_1,v_2:[0,\infty)\rightarrow [0,1]$ which satisfies 
\be\label{veeOneveeTwoEqn}
\begin{cases}
(\dot v_1(t)+3v_2(t)-2)(w_1-v_1(t))\ge 0\\
(\dot v_2(t)+3/2-3v_1(t))(w_2-v_2(t))\ge 0
\end{cases}
\ee
for each $w_1,w_2\in[0,1]$ and given initial conditions 
\be\label{UzeroVzeroInit}
v_1(0)=v_1^0\in [0,1]\quad \text{and}\quad v_2(0)=v_2^0\in [0,1].
\ee

\par {\bf Solution which parametrizes a circle}. Observe that the solution of the system of ODEs 
$$
\dot v_1(t)+3v_2(t)-2=0\text{ and }\dot v_2(t)+3/2-3v_1(t)=0
$$
subject to the initial conditions \eqref{UzeroVzeroInit} is 
\be\label{CircleVeeParam}
\begin{cases}
v_1(t)=(v_1^0-1/2)\cos(3t)+(2/3-v_2^0)\sin(3t)+1/2
\\
v_2(t)=(v_2^0-2/3)\cos(3t)+(v_1^0-1/2)\sin(3t)+2/3.
\end{cases}
\ee
In particular, this solution parametrizes the circle 
\be\label{CircOfInterest}
(v_1-1/2)^2+(v_2-2/3)^2=(v_1^0-1/2)^2+(v_2^0-2/3)^2
\ee
counterclockwise in the $v_1v_2$ plane. It is easily checked that if
\be\label{smallData}
(v_1^0-1/2)^2+(v_2^0-2/3)^2\le (1/3)^2,
\ee
then $v_1(t),v_2(t)\in [0,1]$ for all $t\ge 0$. In this case, the circular path \eqref{CircleVeeParam} solves \eqref{veeOneveeTwoEqn} and \eqref{UzeroVzeroInit}.
\\
\par {\bf Convergence}. Observe that since the path \eqref{CircleVeeParam} lies on a circle centered at $(1/2,2/3)$, it will not converge to the circle's center as $t\rightarrow\infty$. However, it's plain to see that 
$$
\lim_{t\rightarrow\infty}\frac{1}{t}\int^t_0v_1(s)ds=\frac{1}{2}\quad\text{and}\quad \lim_{t\rightarrow\infty}\frac{1}{t}\int^t_0v_2(s)ds=\frac{2}{3}.
$$
As a result, when \eqref{smallData} holds, the solution of \eqref{veeOneveeTwoEqn} does not converge to the Nash equilibrium of $F_1,F_2$ but its Ces\`aro mean does.  \par It is also possible describe the solution of \eqref{veeOneveeTwoEqn} in the case 
$$
(v_1^0-1/2)^2+(v_2^0-2/3)^2> (1/3)^2.
$$
The corresponding solution will necessarily intersect the boundary of the square. It will then traverse the boundary counterclockwise until it hits the point $(1/2,1)$. For all later times, the solution will traverse  the circle $(v_1-1/2)^2+(v_2-2/3)^2= (1/3)^2$ counterclockwise. As a result, the Ces\`aro mean of this solution will also converge to the Nash equilibrium of $F_1,F_2$. We leave the details to the reader.

\bibliography{MixedNashbib}{}

\bibliographystyle{plain}

\typeout{get arXiv to do 4 passes: Label(s) may have changed. Rerun}

\end{document}